\documentclass[12pt,reqno,psfig,openbib]{amsart}

\usepackage[pagebackref=true, colorlinks=true, citecolor=blue]{hyperref}
\usepackage{amssymb,amsmath,graphicx,amsfonts,euscript}
\usepackage{fancyhdr}
\usepackage{epsfig}
\usepackage{setspace}
\usepackage{dsfont}
\usepackage[all]{xy}
\usepackage{amsmath}
\usepackage{enumitem}


\usepackage{caption}
\usepackage{subcaption}

\usepackage{soul}

\usepackage{float}


 \usepackage{pst-grad} 
 \usepackage{pst-plot} 

\newtheorem{theorem}{Theorem}[section]
\newtheorem{rem}[theorem]{Remark}

\numberwithin{equation}{section}
\numberwithin{figure}{section}

 \usepackage[left=1.25in,top=1.15in,right=1.25in]{geometry}
\setlength{\textheight}{8.5in}

\usepackage{verbatim}

\newcommand{\spacer}{\vspace{1.5mm}}

\date{\today}

\renewcommand{\epsilon}{\varepsilon}
\newcommand{\ep}{\varepsilon}

\newcommand{\ov}{\overline}
\newcommand{\pa}{\partial}
\newcommand{\tht}{\theta}
\newcommand{\Tht}{\Theta}

\newcommand{\pri}{\prime}
\newcommand{\sig}{\sigma}
\newcommand{\kap}{\kappa}
\newcommand{\blds}{\boldsymbol}

\begin{document}

\title[SL-PINN methods for boundary layer problems]
{Semi-analytic PINN methods for boundary layer problems in a rectangular domain}

\author[G.-M. Gie, Y. Hong, C.-Y.  Jung, and T. Munkhjin]
{Gung-Min Gie$^{1}$, 
Youngjoon Hong$^{2}$,  
Chang-Yeol  Jung$^3$, and
Tselmuun Munkhjin$^3$}
\address{$^1$ Department of Mathematics, University of Louisville, Louisville, KY 40292}
\address{$^2$ Department of Mathematical Sciences, Korea Advanced Institute of Science and Technology}
\address{$^3$ Department of Mathematical Sciences, Ulsan National Institute of Science and Technology, 
Ulsan 44919, Korea}
\email{gungmin.gie@louisville.edu}
\email{hongyj@kaist.ac.kr}
\email{cjung@unist.ac.kr}
\email{tmunkhjin@unist.ac.kr}

\begin{abstract}
Singularly perturbed boundary value problems pose a significant challenge for their numerical approximations because of 
the presence of sharp boundary layers. 
These sharp boundary layers are responsible for the stiffness of solutions, which leads to large computational errors, if not properly handled. 
It is well-known that the 
classical numerical methods as well as 
the Physics-Informed Neural Networks (PINNs)  
require some special treatments near the boundary, e.g., using 
extensive mesh refinements or 
finer collocation points, 
in order to obtain an accurate approximate solution especially inside of the stiff boundary layer. 
In this article, 
we modify the PINNs  
and construct our new semi-analytic SL-PINNs suitable for singularly perturbed boundary value problems. 
Performing the boundary layer analysis, we first find the corrector functions describing the singular behavior of the stiff solutions inside boundary layers. 
Then  
we obtain the SL-PINN approximations of the singularly perturbed problems 
by embedding the explicit correctors in the structure of PINNs or 
by training the correctors together with the PINN approximations.  
Our numerical experiments confirm that our new SL-PINN methods produce
stable and accurate approximations for stiff solutions.
\end{abstract}


\maketitle

\tableofcontents

\section{Introduction}\label{S. Intro}

We employ the neural network architectures to approximate effectively the solutions of various 2D singularly perturbed elliptic boundary value problems in a rectangular domain $\Omega=(0,1)^2$. 
More precisely, we consider,  
\begin{align}\label{con_dif}
        \left\{
                \begin{array}{rl}
                	\spacer       
        - \ep\Delta u - \mathbf{b}\cdot\nabla u 
        = f, &\ \text{in $\Omega$},\\
        u = 0, &\ \text{on $\partial\Omega$},
        \end{array}
        \right.
\end{align}
where $0<\ep<\!< 1$ is a small perturbation parameter. 
The
given functions 
$\mathbf{b} = (b_1(x,y),b_2(x,y))$ 
and 
$f=f(x,y)$ are assumed to be smooth as much as needed for the analysis and computations performed in this article.

Singularly perturbed boundary value problems, e.g., the one described by equation (\ref{con_dif}) above, 
are known to generate the so-called  boundary layer, which is a thin region near the boundary of the domain where the solution exhibits a sharp transition. 
The mathematical theory of singular perturbations and boundary layers has been extensively studied in the literature, and several books and papers have been written on the subject, including \cite{Book16, Ho95, OM08, SK87}.

From the scientific computation's  point of view, 
approximating stiff solutions to any singular perturbation problem is well-known to be a challenging task  because the stiffness of the solution, especially inside the boundary layer, causes a large computational error. 
Most of the traditional numerical methods, such as finite element or finite volume methods, often require massive mesh refinements near the boundary in order to achieve a sufficient accuracy of numerical solutions, but the computational cost for this type of refinements is highly expensive with respect to the size of the small perturbation parameter.

Compared to the traditional methods, some semi-analytical methods have been proposed to compute the solutions more efficiently and effectively, such as the ones presented in \cite{CJL19, GJL1, GJL2, GJL22, HK82, HJL13, HJT14}. 
These methods enrich the basis of traditional numerical methods by adding a global basis function, called \textit{correctors}, which describes the singular behavior of the solution inside the boundary layer. 
By enriching the basis, these methods can capture the sharp transition of the solution without requiring massive mesh refinements near the boundary. 
The semi-analytic methods have proven to be highly efficient in solving singularly perturbed problems, and they have been successfully applied in various applications. 

In our paper, we use the  neural network methods, in particular the Physics-Informed Neural Networks (PINNs) to solve the singularly perturbed problems. 
Neural networks offer a novel approach to approximating solutions to differential equations using a compositional structure instead of an additive one. 
This approach allows for analytical expression of the solution, eliminating the need for interpolation. The formulation of the loss function is straightforward, and the additional effort required for problem-dependent factors is minimal. The method is mesh-free and generally applicable \cite{BN18, BE21, CCWX22, HL21, KKLPWY21, KZK19, KZK21, RK18, RBPK17, SS18, FSF22}. However, there is still a lack of conclusive results regarding convergence speed and approximation accuracy.

Raissi et al.'s PINNs \cite{RPK19} drew inspiration from earlier works by e.g. Lagaris et al. \cite{LLF98} and they expanded on existing concepts and introduced fundamentally new approaches, including a discrete time-stepping scheme that optimizes the predictive power of neural networks. However, while PINNs have shown promising results in various applications, such as fluid dynamics and solid mechanics, they are known to suffer from the spectral bias phenomenon \cite{CF21, RB19}.
The spectral bias phenomenon refers to the tendency of neural networks to learn low-frequency features before high-frequency features. In other words, the network tends to learn coarse features before fine details, which can result in a biased representation of the solutions. It is also known that the PINNs often fail to solve the solutions exhibiting multi-scale structures \cite{LA}.
The PINNs are particularly problematic when dealing with singularly perturbed problems, where the solution can vary rapidly over small scales in the domain, i.e., high-frequency solutions. In these cases, the neural network may not have sufficient resolution or detail to accurately capture the solution, leading to inaccurate results \cite{FT20, RYK20, WTP21}. Recently, the boundary-layer PINNs are proposed
to utilize the matching techniques of the outer and inner layer solutions based on singular perturbation theory \cite{ACD}.

To overcome the spectral bias phenomenon or to solve rapidly varying solutions, 
we combine the semi-analytic methods with the PINNs (e.g., see \cite{GHJ}). 
In fact, 
we obtain our new SL-PINN (singular layer PINN) approximations for the singularly perturbed problems 
by embedding the explicit correctors in the structure of PINNs or 
by training the correctors together with the PINN approximations. 
For this process, in Section \ref{bl_analysis}, we first perform the boundary layer analysis for the problem (\ref{con_dif}), and study the asymptotic behavior of the solution.  
In Section \ref{sec_numerics}, we use the analytic results to modify the PINNs and 
construct our new SL-PINNs suitable for 
the singular perturbation problem (\ref{con_dif}). 
Numerical computations of the SL-PINNs are presented as well to verify that our novel SL-PINNs capture well the sharp transition of the stiff solution and hence produce a good approximation.

\section{Boundary layer analysis}\label{bl_analysis}
In this section, 
we investigate 
the boundary layer 
of  
the singularly perturbed convection-diffusion 
problem(\ref{con_dif}) in a rectangular domain with corners.  
Depending on the direction of the convection vector $\blds{b}$, 
the solution to (\ref{con_dif}) exhibits a distinct behavior 
near each boundary and corner of the rectangular domain $\Omega$.  
Hence we consider two different cases in this article:
\begin{itemize}
    \item[(1)] 
        $b_1(x,y)>0$ and $b_2(x,y) > 0$ in $\Omega$ (\textit{Non-characteristic boundary case})
    \item[(2)] 
        $b_1 > 0$ and $b_2 = 0$ in $\Omega$ (\textit{Characteristic boundary case})
\end{itemize}

Especially when a portion of the boundary is characteristic, i.e., 
the convection vector $\blds{b}$ is parallel to that portion of the boundary like the case $2$ above,  
the boundary layers of (\ref{con_dif}) associated with the two sides sharing the corner 
may interact with each other as well as the corner singularity;  
see, e.g., \cite{Gri85, SK87, Book16, GJT13, GJT_Review} that demonstrate  the diversity and complexity of the boundary layers in a rectangle. 


For the boundary layer analysis below, 
we introduce and use 
the following norms,
\begin{align}
\|u\|_{L^2(\Omega)} = \left( \int_{\Omega} |u|^2 dxdy \right)^{1/2},
\end{align}
\begin{align}\label{energy}
\|u\|_\ep = \|u\|_{L^2(\Omega)} + \sqrt{\ep}\|\nabla u\|_{L^2(\Omega)}.
\end{align}

\subsection{Non-characteristic boundary case  ($b_1>0, \, b_2>0$)}\label{sec_non_char}
When the boundary is non-characteristic with  
$b_1>0, \, b_2>0$, 
the flow associated with (\ref{con_dif}) 
is moving in across the boundaries at $x= 1$ and $y = 1$. 
Hence, 
by setting 
$\ep=0$ in (\ref{con_dif}) 
and imposing the so-called {\it inflow boundary condition} at $x= 1$ and $y = 1$, 
we find the equation for the limit solution $u^0$ as 
\begin{align}\label{con_dif_limit}
        \left\{
                \begin{array}{rl}
                	\spacer        
                        - b_1 \dfrac{\partial u^0}{\partial x} 
                        - b_2 \dfrac{\partial u^0}{\partial y} = f, &\ \text{in $\Omega$},\\
                        u = 0, &\ \text{at $x=1$ or $y=1$}.
                \end{array}
        \right.
\end{align}
The well-posedness and regularity results for the problem (\ref{con_dif_limit}) can be verified by using the method of characteristics. 
We assume hereafter that the data $\blds{b}$ and $f$ are smooth enough so that 
the limit solution $u^0$ is sufficiently regular for the boundary layer analysis below.

Now, to study the asymptotic behavior of $u^\ep$ (solution of (\ref{con_dif})) as $\ep \rightarrow 0$, 
we first notice that $ u - u^0 = 0$ on $x = 1$ or  $y =1$, but 
\begin{equation}\label{e:nonzero_boundary_non-chatacteristic}
    u-u^0 
    \neq 0 
    \text{ along the boundaries at $x = 0$ and $y = 0$.}
\end{equation}
Hence we expect that the so-called {\it ordinary  
boundary layers} occur near 
$x = 0$ and $y = 0$ (see Fig. \ref{f:correctors_nonchar} below).  
To resolve this inconsistency, 
we 
 construct sequentially below  
a number of correctors as  
\begin{itemize}
    \item [$u^0$:]
        corresponding limit solution defined in (\ref{con_dif_limit}),  
    \item [${\tht}_B$:]
        ordinary boundary layer corrector (OBL) near the bottom boundary at $y = 0$,  
    \item [${\tht}_L$:]
        ordinary boundary layer corrector (OBL) near the left boundary at $x = 0$,
    \item [$\eta$ :]
        corner boundary layer corrector (CBL) that 
        manages the interaction between 
        ${\tht}_B$ and ${\tht}_L$ near the corner at $(0, 0)$.
\end{itemize}

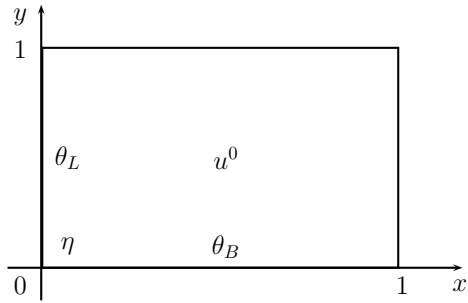
\begin{figure}[h] 
\begin{center}
\scalebox{.70} 
{
\begin{pspicture}(0,-3.16)(8.716875,3.16)
\psline[linewidth=0.04cm,arrowsize=0.05291667cm
2.0,arrowlength=1.4,arrowinset=0.4]{->}(0.056875,-1.86)(8.696875,-1.86)
\psline[linewidth=0.04cm,arrowsize=0.05291667cm
2.0,arrowlength=1.4,arrowinset=0.4]{->}(0.696875,-2.48)(0.696875,3.14)
\psframe[linewidth=0.04,dimen=outer](7.496875,2.34)(0.696875,-1.88)
\usefont{T1}{ptm}{m}{n}
\rput(4.2042187,0.225){\large ${u}^0$}
\usefont{T1}{ptm}{m}{n}
\rput(4.2042187,-1.45){\large ${\tht}_B$}
\usefont{T1}{ptm}{m}{n}
\rput(1.2,-1.45){\large $\eta$}
\usefont{T1}{ptm}{m}{n}
\rput(1.2,0.225){\large ${\tht}_L$}
\usefont{T1}{ptm}{m}{n}
\rput(0.3,-2.2){\large $0$}
\usefont{T1}{ptm}{m}{n}
\rput(0.3, 2.3){\large $1$}
\usefont{T1}{ptm}{m}{n}
\rput(0.3,2.9){\large $y$}
\usefont{T1}{ptm}{m}{n}
\rput(7.555,-2.2){\large $1$}
\usefont{T1}{ptm}{m}{n}
\rput(8.65,-2.2){\large $x$}
\end{pspicture}
}
\caption{Limit solution, and ordinary and corner boundary layer correctors}\label{f:correctors_nonchar}
\end{center}
\end{figure} 

We first introduce smooth cut-off functions near the left and bottom boundaries in the form, 
\begin{equation}\label{e:delta}
\begin{array}{cc}
\sig_B(y)
        = \left\{
                \begin{array}{cl}
                                1, & \text{$0 \leq y \leq 1/8$},\\
                                0, & \text{$y \geq 1/4$},
                \end{array}
            \right. 
&
\sig_L(x)
        = \left\{
                \begin{array}{cl}
                                1, & \text{$0 \leq x \leq 1/8$},\\
                                0, & \text{$x \geq 1/4$}.
                \end{array}
            \right.
\end{array}
\end{equation}

To resolve the discrepancies of $u-u^0$ at $y=0$, 
we write an asymptotic equation for the ordinary boundary layer near $y=0$,  
\begin{align}\label{obl_star_zero_B}
        \left\{
                \begin{array}{rl}
                	\spacer
		-\ep \dfrac{\partial^2 { \ov{\theta}_B}}{\partial y^2} 
        - b_2 (x, 0) \dfrac{\partial  \ov{ \theta}_B}{\partial y} = 0,
                    & \\
                \spacer
                 { \ov{\theta}_B} = - u^0(x,0),
                 &
                 \quad
                 \text{at $y=0$},\\
                 { \ov{\theta}_B} \rightarrow 0,
                 &
                 \quad
                 \text{as }  {y} \rightarrow \infty.
                \end{array}
        \right.
\end{align}
The explicit expression of $ { \ov{\theta}_B}$ is readily available as
\begin{align}\label{theta0_explicit_bar}
     { \ov{\theta}_B} 
        = - u^0(x,0) \exp\left(- b_2 (x, 0) \frac{y}{\ep}\right).
\end{align}
Then we define the ordinary boundary layer corrector 
${{\theta}_B}$ in the form, 
\begin{align}\label{theta_B}
     {{\theta}_B} 
        = 
            \sig_B(y) \ov{\theta}_B.
\end{align}
Using (\ref{obl_star_zero_B}) - (\ref{theta_B}), 
we find that the ordinary boundary layer corrector 
${{\theta}_B}$ satisfies the equation, 
\begin{align}\label{tht_B_eqn}
        \left\{
                \begin{array}{rl}
                	\spacer
		-\ep \dfrac{\partial^2 { {\theta}_B}}{\partial y^2} 
        - b_2 (x, 0) \dfrac{\partial  { \theta}_B}{\partial y} 
        &
        = 
                        -\ep \sig_B^{\pri} \dfrac{\pa \ov{\tht}_B}{\pa y}   
                        -\ep \sig_B^{\pri \pri} \ov{\tht}_B
                        -b_2(x, 0) \sig_B^{\pri} \ov{\tht}_B
                    = 
                    e.s.t.\footnote ,
                    \\
                \spacer
                 {{\theta}_B} 
                 &
                 = - u^0(x,0),
                 \quad
                 \text{at $y=0$},\\
                 \spacer
                 {{\theta}_B} 
                 &
                 = - \sig_B(y) u^0(0,0)  \exp\left(- b_2 (0, 0)y/\ep \right),
                 \quad
                 \text{at $x=0$},\\
                 {{\theta}_B} 
                 &
                 = 0,
                 \quad
                 \text{at $x=1$ or $y=1$}.
                \end{array}
        \right.
\end{align}
\footnotetext{$e.s.t.$ denotes a term that is exponentially small with respect to $\ep$ in the energy norm (\ref{energy}). An $e.s.t.$ is negligible compared to any $\ep^j$, $j \geq 0$.}

Similar to the process above near the boundary at $y=0$, 
to balance the non-zero value $u-u^0$ at $x=0$,
we introduce 
the ordinary boundary layer corrector 
${{\theta}_L}$ in the form, 
\begin{align}\label{theta_L}
     {{\theta}_L} 
        = 
            \sig_L(x) \ov{\theta}_L, 
\end{align}
where the $\ov{\theta}_L$ is defined as a solution of 
\begin{align}\label{obl_star_zero}
        \left\{
                \begin{array}{rl}
                	\spacer
		-\ep \dfrac{\partial^2 {\ov{\theta}_L}}{\partial x^2} 
        - 
        b_1 (0, y) 
        \dfrac{\partial  {\ov{\theta}_L}}{\partial x} = 0,
                    & \\
                \spacer
                 {\ov{\theta}_L} = - u(0,y),
                 &
                 \quad
                 \text{at $x=0$},\\
                 {\ov{\theta}_L} \rightarrow 0,
                 &
                 \quad
                 \text{as }  {x} \rightarrow \infty.
                \end{array}
        \right.
\end{align}
The explicit expression of $\ov{\theta}_L$ is given by  
\begin{align}\label{theta0_explicit_bar}
     {\ov{\theta}_L} 
        = - u^0(0,y) \exp\left(- b_1 (0, y) \frac{x}{\ep}\right).
\end{align}
One can verify that 
the ordinary boundary layer corrector 
${{\theta}_L}$ satisfies the equation, 
\begin{align}\label{tht_L_eqn}
        \left\{
                \begin{array}{rl}
                	\spacer
		-\ep \dfrac{\partial^2 { {\theta}_L}}{\partial x^2} 
        - b_1 (0, y) \dfrac{\partial  { \theta}_L}{\partial x} 
        &
        = 
                        -\ep \sig_L^{\pri} \dfrac{\pa \ov{\tht}_L}{\pa x}   
                        -\ep \sig_L^{\pri \pri} \ov{\tht}_L
                        -b_1(0, y) \sig_L^{\pri} \ov{\tht}_L
                    = 
                    e.s.t.,
                    \\
                \spacer
                 {{\theta}_L} 
                 &
                 = - u^0(0,y),
                 \quad
                 \text{at $x=0$},\\
                 \spacer
                 {{\theta}_L} 
                 &
                 = - \sig_L(x)u^0(0,0) \exp\left(- b_1 (0, 0)x/\ep \right),
                 \quad
                 \text{at $y=0$},\\
                 {{\theta}_L} 
                 &
                 = 0,
                 \quad
                 \text{at $x=1$ or $y=1$}.
                \end{array}
        \right.
\end{align}

At this point,
the difference between $u$
and the proposed expansion $u^0 +  \tht_B +  {\theta}_L$ 
attains the boundary values as 
\begin{equation}\label{e:CH4_2:GIE_2}
\begin{array}{l}
 \displaystyle
        u^{\ep} - (u^0 +  \tht_B +  {\theta}_L)
        			=
                        \left\{
                                \begin{array}{l}
                                		\spacer
                                         \displaystyle
                                                    - \tht_B(0, y)
                                                    =
                                                    \sig_B(y)
                    u^0(0,0)
                    \exp\Big(- b_2(0, 0)\frac{y}{\ep}\Big),
                                                        \quad
                                                    \text{at } x=0,\\
                                           \spacer
                                         \displaystyle
                                                     - {\theta_L (x, 0)}
                                                     =
                                                     \sig_L(x) 
                    u^0(0,0)
                    \exp\Big(- b_1(0, 0)\frac{x}{\ep}\Big),
                                                    \quad
                                                    \text{at } y=0,\\
                                        \displaystyle
                                                    0,
                                                    \quad
                                                    \text{at } y=1
                                                    \text{ or }
                                                    x = 1.
                                \end{array}
                        \right.
\end{array}
\end{equation}

To handle the non-zero boundary values 
$- \tht_B$ at $x=0$ and $- {{\tht}_L}$ at $y=0$, appearing in (\ref{e:CH4_2:GIE_2}),
we  introduce a corner corrector $ {{\eta}}$ in the form, 
\begin{align}\label{zeta_explicit_bar_zero}
             {\eta} 
                = 
                    \sig_L(x) \sig_B(y)
                    \ov{\eta}.
\end{align}
with 
\begin{align}\label{zeta_explicit_bar_zero_approx}
             \ov{\eta} 
                = 
                    u^0(0,0)
                    \exp\Big(- b_1(0, 0)\frac{x}{\ep}\Big)
                    \exp\Big(- b_2(0, 0)\frac{y}{\ep}\Big).
\end{align}

The corner corrector $\eta$ satisfies the equation, 
\begin{equation}\label{e:ocl_eq_zero}
        - \ep \dfrac{\partial^2 {{\eta}}}{\partial x^2} - \ep \dfrac{\partial^2 {{\eta}}}{\partial y^2}
        - b_1 (0, 0) \dfrac{\partial {{\eta}}}{\partial x} 
        - b_2 (0, 0) \dfrac{\partial {{\eta}}}{\partial y}
        = 
            e.s.t.
\end{equation}
Moreover, we 
see that the difference between $u$
and the proposed expansion, 
\begin{equation}\label{e:asymp_exp_non}
    u 
        \simeq 
            u^0 +  \tht_B +  {\theta}_L +  {\eta},
\end{equation}
is now finally balanced on the boundary, i.e.,  
\begin{equation}\label{e:CH4_2:GIE_3}
        u  - (u^0 +  \tht_B +  {\theta}_L +  {\eta})
        			=
			0,
			\quad
			\text{on }
			\pa \Omega.
\end{equation}

In order to validate our asymptotic expansion of $u$, 
we introduce
the difference between $u$ 
and the proposed expansion,
\begin{equation}\label{e:CH4_2_w0_eq}
    w
		=
		u
		-
		(u^0 + \tht_B +  {\theta}_L +  {\eta}).
\end{equation}
 
Using the equations (\ref{con_dif}), (\ref{con_dif_limit}), (\ref{tht_B_eqn}), (\ref{tht_L_eqn}), (\ref{e:ocl_eq_zero}), and 
(\ref{e:CH4_2:GIE_3}), 
we
write the equation for $w$,
\begin{equation}\label{e:w_eq_zero}
        \left\{
                \begin{array}{rl}
                                - \ep \Delta w
                                - b_1\dfrac{\pa w}{\pa x} 
                                - b_2 \dfrac{\pa w}{\pa y}
                                        =
                                                \mathcal{R}_1+
                                              \mathcal{R}_2  + e.s.t.
                               &
                               \quad
                               \text{in } \Omega,\\
                                w
                                        =
                                                0,
				&
                                                \quad \text{on } \pa\Omega,
                \end{array}
        \right.
\end{equation}
where 
\begin{equation}\label{e:CH4_1_RHS_1111}
	\mathcal{R}_1 
		=
                \ep 
                \bigg(
			\Delta u^0
              + \Delta \eta
              + \dfrac{\partial^2 {\theta}_B}{\partial x^2}
              + \dfrac{\partial^2 {\theta}_L}{\partial y^2}
              + \Delta \eta 
              \bigg), 
\end{equation}
\begin{equation}\label{e:CH4_1_RHS_1112}
\begin{array}{rl}
	\mathcal{R}_2 
		=
	 &  
        \spacer
            \big(
                b_1(x, y) - b_1(0, y)
            \big)
            \dfrac{\partial {\theta}_L}{\partial x}
              + 
            \big(
                b_2(x, y) - b_2(x, 0)
            \big)
            \dfrac{\partial {\theta}_B}{\partial y}\\
        &
            +
            \big(
                b_1(x, y) - b_1(0, 0)
            \big)
            \dfrac{\partial \eta}{\partial x}
              + 
            \big(
                b_2(x, y) - b_2(0, 0)
            \big)
            \dfrac{\partial \eta}{\partial y}.
\end{array}
\end{equation}

Now, performing the energy estimates on the equation (\ref{e:w_eq_zero}) and 
using the estimates on the correctors, 
we state and prove the following convergence result.

\begin{theorem}[{\bf Non-characteristic boundary case}]\label{non_char_thm}
Assuming the data $f$ and $\blds{b}$ are sufficiently regular, 
the solution $u$ to (\ref{con_dif}) with $b_1, b_2 > 0$ (\textit{non-characteristic boundary case})  
satisfies the asymptotic expansion in (\ref{e:asymp_exp_non}) 
in the sense that  
\begin{align}\label{e:validity_non}
\|
    u - u^0 - {\theta}_B - {\theta}_L - {\eta}
\|_\ep 
        \leq 
            \kap \ep^{\frac{3}{4}}.
\end{align}
Moreover, 
$u$ converges to $u^0$ as the perturbation parameter $\ep$ tends to zero as
\begin{align}\label{e:VVL_non}
\|
    u - u^0 
\|_L^2(\Omega) 
        \leq 
            \kap \ep^{\frac{1}{4}}.
\end{align}
\end{theorem}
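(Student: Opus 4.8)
The plan is to run a weighted energy estimate on the remainder equation (\ref{e:w_eq_zero}), which already isolates the residual $\mathcal{R}_1+\mathcal{R}_2+e.s.t.$ left over by the truncated expansion, fed by sharp bounds on the correctors. First I would record the corrector estimates. Differentiating the explicit formulas (\ref{theta_B}), (\ref{theta_L}), (\ref{zeta_explicit_bar_zero})--(\ref{zeta_explicit_bar_zero_approx}), each of $\tht_B,\tht_L,\eta$ is a smooth amplitude times a boundary-layer exponential --- $e^{-cy/\ep}$ for $\tht_B$, $e^{-cx/\ep}$ for $\tht_L$, $e^{-c(x+y)/\ep}$ for $\eta$ --- so, up to an $e.s.t.$ (derivatives of the cut-offs $\sig_B,\sig_L$ fall only where the exponential is already $O(e^{-c/\ep})$), a $k$-th tangential and $\ell$-th normal derivative is a polynomial of degree $\le k$ in $y/\ep$ (resp.\ $x/\ep$, or $x/\ep$ and $y/\ep$) times $\ep^{-\ell}$ times the same exponential. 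Combined with the elementary identities $\int_0^\infty y^m e^{-2cy/\ep}\,dy=O(\ep^{m+1})$ and the boundedness of $\Omega$, this yields $\|\tht_B\|_{L^2(\Omega)}=O(\ep^{1/2})$, $\|\partial_x\tht_B\|_{L^2(\Omega)}=O(\ep^{1/2})$, $\|\partial_y\tht_B\|_{L^2(\Omega)}=O(\ep^{-1/2})$, the analogous bounds for $\tht_L$ with $x\leftrightarrow y$, and $\|\eta\|_{L^2(\Omega)}=O(\ep)$, $\|\nabla\eta\|_{L^2(\Omega)}=O(1)$.

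Next I would carry out the energy estimate for $w$. Since no sign is assumed on $\nabla\cdot\blds{b}$, I would test (\ref{e:w_eq_zero}) against $e^{\lambda x}w$ with $\lambda$ a large constant: using $b_1>0$ bounded below on $\ov{\Omega}$ and $w|_{\pa\Omega}=0$, the convection term becomes $\tfrac12\int_\Omega(\nabla\cdot\blds{b}+\lambda b_1)e^{\lambda x}w^2\ge c_0\|w\|_{L^2(\Omega)}^2$, while the diffusion term gives $\ep\int_\Omega e^{\lambda x}|\nabla w|^2$ minus an $O(\ep)\|w\|_{L^2(\Omega)}^2$ term absorbed for $\ep$ small; thus the left-hand side dominates $\|w\|_\ep^2$. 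The right-hand side $\int_\Omega(\mathcal{R}_1+\mathcal{R}_2)e^{\lambda x}w$ is then estimated term by term: the Taylor factors in $\mathcal{R}_2$ --- $b_1(x,y)-b_1(0,y)=O(x)$, $b_2(x,y)-b_2(x,0)=O(y)$, $b_i(x,y)-b_i(0,0)=O(x+y)$ --- absorb the $1/\ep$ carried by a normal derivative of a corrector, and the extra power of $\ep$ needed is squeezed out of $w|_{\pa\Omega}=0$, either through Hardy's inequality / the pointwise bound $|w(x,y)|\le y^{1/2}\|\partial_y w(x,\cdot)\|_{L^2(0,1)}$ near the corresponding side, or (as for the pieces of $\mathcal{R}_1$) through one further integration by parts moving a derivative onto $w$, e.g.\ $\ep\int_\Omega\Delta\eta\,e^{\lambda x}w=-\ep\int_\Omega\nabla\eta\cdot\nabla(e^{\lambda x}w)$, controlled by $\sqrt\ep\|\nabla\eta\|_{L^2(\Omega)}\cdot\sqrt\ep\|\nabla w\|_{L^2(\Omega)}$ plus lower-order terms. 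One shows in this way that every contribution is bounded by $\kap\,\ep^{3/4}$ (or smaller) times $\|w\|_{L^2(\Omega)}$ or $\sqrt\ep\|\nabla w\|_{L^2(\Omega)}$; Young's inequality and absorption of the $w$-factors into the coercive left-hand side then give $\|w\|_\ep\le\kap\,\ep^{3/4}$, which is (\ref{e:validity_non}).

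Finally, (\ref{e:VVL_non}) follows from (\ref{e:validity_non}) and the triangle inequality together with the corrector bounds: $\|u-u^0\|_{L^2(\Omega)}\le\|w\|_{L^2(\Omega)}+\|\tht_B\|_{L^2(\Omega)}+\|\tht_L\|_{L^2(\Omega)}+\|\eta\|_{L^2(\Omega)}\le\kap\,\ep^{3/4}+\kap\,\ep^{1/2}\le\kap\,\ep^{1/4}$ for $\ep$ small. I expect the main obstacle to be the coercivity of the convection--diffusion operator in the $\ep$-weighted norm (\ref{energy}): absent a sign condition on $\nabla\cdot\blds{b}$, a plain $L^2$ test function does not control $\|w\|_{L^2(\Omega)}$, so the weighted test function $e^{\lambda x}w$ (exploiting $b_1\ge b_1^{\min}>0$) is essential; the subsequent residual bookkeeping is routine in spirit, the one subtlety being that $\ep\Delta\eta$ and, more generally, the second normal derivatives of the correctors are \emph{not} small in $L^2(\Omega)$, so they must be paired with $\nabla w$ after integration by parts, using the $\sqrt\ep\,\|\nabla w\|_{L^2(\Omega)}$ component of the energy norm.
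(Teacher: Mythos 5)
Your proposal follows essentially the same route as the paper's proof: a weighted energy estimate on the remainder equation (\ref{e:w_eq_zero}) (the paper tests against $e^{x}w$, you against $e^{\lambda x}w$ with $\lambda$ large), followed by bounds on the residuals $\mathcal{R}_1,\mathcal{R}_2$, Young's inequality and absorption, and then the triangle inequality with the corrector $L^2$ bounds to pass from (\ref{e:validity_non}) to (\ref{e:VVL_non}). The only difference is one of detail: the paper simply asserts $\|\mathcal{R}_1\|_{L^2(\Omega)}\le\kap\,\ep$ and $\|\mathcal{R}_2\|_{L^2(\Omega)}\le\kap\,\ep^{3/4}$ and applies Cauchy--Schwarz, whereas your Hardy-inequality and integration-by-parts refinements are precisely the kind of bookkeeping needed to extract the extra powers of $\ep$ from terms such as $\big(b_2(x,y)-b_2(x,0)\big)\partial_y\theta_B$ and $\ep\Delta\eta$, so your version is, if anything, more complete.
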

\begin{proof}
Multiply (\ref{e:w_eq_zero}) by $e^x w$ and integrating over $\Omega$, 
we write 
\begin{align}
\begin{split}
&\ep\|\nabla w \|^2_{L^2(\Omega)} + \frac{1-\ep}{2}\|w \|^2_{L^2(\Omega)} \\
&
    \qquad
        \leq
        \kappa
                        \left|\int_{\Omega}
                        \left[
                        \mathcal{R}_1
                        +
                        \mathcal{R}_2
                        + e.s.t.
                        \right]
                        w \, dxdy\right|\\
&
    \qquad
        \leq 
        \kappa \| \mathcal{R}_1 \|^2_{L^2(\Omega)} 
        + 
        \kappa \| \mathcal{R}_1 \|^2_{L^2(\Omega)}
        +
        \frac{1}{4}\|w \|^2_{L^2(\Omega)}. 
\end{split}
\label{CH4_2_FFF_0_1}
\end{align}
Using the explicit forms of the correctors and using (\ref{e:CH4_1_RHS_1111}) and (\ref{e:CH4_1_RHS_1112}), 
one can verify that 
\begin{equation}\label{est_rem_non}
        \| \mathcal{R}_1 \|_{L^2(\Omega)} 
            \leq \kap \ep, 
        \qquad 
        \| \mathcal{R}_1 \|_{L^2(\Omega)} 
            \leq \kap \ep^{\frac{3}{4}}.
\end{equation}
Combining (\ref{CH4_2_FFF_0_1}) and (\ref{est_rem_non}), 
we find that 
\begin{equation}
\ep\|\nabla w \|^2_{L^2(\Omega)} 
+ 
\frac{1}{8}\|w \|^2_{L^2(\Omega)} 
        \leq
        \kappa \ep^\frac{3}{2},  
\end{equation}
and hence we obtain (\ref{e:validity_non}).  
Thanks to the smallness of the correctors, (\ref{e:VVL_non}) follows from (\ref{e:validity_non}), 
and now the proof is complete. 
\end{proof}

\subsection{Characteristic boundary case ($b_1 > 0, b_2=0$)}\label{S.char}
When    
$b_1>0, \, b_2 = 0$, 
the flow associated with (\ref{con_dif}) 
is moving in the $x$-direction across 
the boundary at $x= 1$.  
Hence, 
by setting 
$\ep=0$ in (\ref{con_dif}) 
and imposing the {\it inflow boundary condition} at $x= 1$, 
we write the equation for the limit solution $u^0$ as  
\begin{align}\label{con_dif_limit_char}
        \left\{
                \begin{array}{rl}
                	\spacer        - b_1 \dfrac{\partial u^0}{\partial x} = f, &\ \text{in $\Omega$},\\
        u = 0, &\ \text{at $x=1$}.
                \end{array}
        \right.
\end{align}
Since $b_1 > 0$, we find the limit solution as  
\begin{align}\label{limit_sol_compati}
u^0 = \int^1_x \dfrac{f}{b_1}(x_1,y) \, dx_1.
\end{align}
The data ${b}_1$ and $f$ are assumed to be smooth enough and 
hence 
we have 
the limit solution $u^0$ sufficiently regular for the boundary layer analysis below.

To investigate the asymptotic behavior of $u^\ep$ (solution of (\ref{con_dif})) as $\ep \rightarrow 0$, 
we first notice that $ u - u^0 = 0$ on $x = 1$ only, 
but 
\begin{equation}\label{e:nonzero_boundary_chatacteristic}
    u-u^0 
    \neq 0 
    \text{ along the boundaries at $x = 0$, $x = 1$ and $y = 0$.}
\end{equation}
Two different types of boundary layers 
occur for this interesting problem, that is, 
parabolic boundary layers near the top and bottom boundaries at $y = 0$ and $y = 1$, 
and the ordinary boundary layer 
near the (left) out-flow boundary at $x = 0$. 
Moreover, the parabolic boundary layers and the ordinary layer interact near the corners at $(0, 0)$ and $(0, 1)$. 
The boundary layer analysis for this characteristic boundary case is in fact very technical and complex, 
and it is fully analyzed in the earlier works  \cite{Gri85, SK87, GJT13, GJT_Review, Book16}. 

Our main goal in this article is to construct a novel neural network, highly efficient for the singularly perturbed problem (\ref{con_dif}), 
by using the analytic information obtained from the associated boundary layer analysis. 
Hence we briefly recall below, especially from Section 4.1.1 in \cite{Book16}, 
the asymptotic expansion of $u$ 
as well as 
the convergence results. 
The correctors introduced in this section will play an important role in Section \ref{char} below 
when we construct our novel neural network for the singularly perturbed problem (\ref{con_dif}).

We construct the asymptotic expansion of $u$ at a small $\ep$ in the form, 
\begin{equation}\label{e:exp_char}
    u 
    \simeq  
        u^0 
        +
        \varphi_B + \varphi_T
        +
        \tht_L
        +
        \zeta_B + \zeta_T, 
\end{equation}
where, see Fig. \ref{f:correctors} below, 
\begin{itemize}
    \item [$u^0$:]
        corresponding limit solution defined in (\ref{limit_sol_compati}),  
    \item [${\varphi}_B$:]
        parabolic boundary layer corrector (PBL) near the bottom  boundary at $y = 0$,
    \item [${\varphi}_T$:]
        parabolic boundary layer corrector (PBL) near the top boundary at $y = 1$,  
    \item [${\tht}_L$:]
        ordinary boundary layer corrector (OBL) near the left boundary at $x = 0$,
    \item [$\zeta_B$ :]
        corner boundary layer corrector (CBL) that 
        manages the interaction between 
        ${\varphi}_B$ and ${\tht}_L$ near the corner at $(0, 0)$, 
    \item [$\zeta_T$ :]
        corner boundary layer corrector (CBL) that 
        manages the interaction between 
        ${\varphi}_T$ and ${\tht}_L$ near the corner at $(0, 1)$.
\end{itemize}
\begin{figure}[h] 
\begin{center}
\scalebox{.70} 
{
\begin{pspicture}(0,-3.16)(8.716875,3.16)
\psline[linewidth=0.04cm,arrowsize=0.05291667cm
2.0,arrowlength=1.4,arrowinset=0.4]{->}(0.056875,-1.86)(8.696875,-1.86)
\psline[linewidth=0.04cm,arrowsize=0.05291667cm
2.0,arrowlength=1.4,arrowinset=0.4]{->}(0.696875,-2.48)(0.696875,3.14)
\psframe[linewidth=0.04,dimen=outer](7.496875,2.34)(0.696875,-1.88)
\usefont{T1}{ptm}{m}{n}
\rput(4.2042187,0.225){\large ${u}^0$}
\usefont{T1}{ptm}{m}{n}
\rput(4.2042187,1.855){\large ${\varphi}_T$}
\usefont{T1}{ptm}{m}{n}
\rput(4.2042187,-1.45){\large ${\varphi}_B$}
\usefont{T1}{ptm}{m}{n}
\rput(1.2,1.855){\large ${\zeta}_T$}
\usefont{T1}{ptm}{m}{n}
\rput(1.2,-1.45){\large ${\zeta}_B$}
\usefont{T1}{ptm}{m}{n}
\rput(1.2,0.225){\large ${\tht}_L$}
\usefont{T1}{ptm}{m}{n}
\rput(0.3,-2.2){\large $0$}
\usefont{T1}{ptm}{m}{n}
\rput(0.3, 2.3){\large $1$}
\usefont{T1}{ptm}{m}{n}
\rput(0.3,2.9){\large $y$}
\usefont{T1}{ptm}{m}{n}
\rput(7.555,-2.2){\large $1$}
\usefont{T1}{ptm}{m}{n}
\rput(8.65,-2.2){\large $x$}
\end{pspicture}
}\caption{Limit solution, and parabolic, ordinary, and corner boundary layer correctors}\label{f:correctors}
\end{center}
\end{figure}
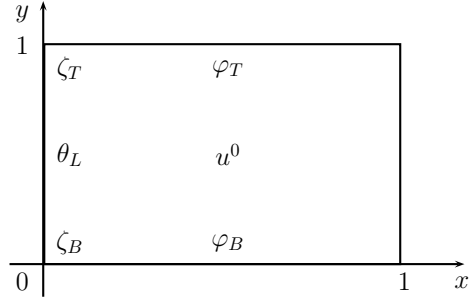

Recalling from Section 4.1.1 in \cite{Book16} that the correctors
$\varphi_B$, $\tht_L$, and $\zeta_B$ are defined as 
\begin{align}
         {{\varphi}}_B
                &
                =
                \sig_B(y) 
                {\ov{\varphi}}_B,
                \quad
                \text{with }
                %
                \label{pbl_zero_compati}
\end{align}
\begin{align}
         {\ov{\varphi}}_B
                &
                =
                -
                \sqrt{\frac{2}{\pi}}
                \int^\infty_{(b_1(0, 0) \bar{y})/\sqrt{2(1-x)}} \,
                        \exp\Big(-\frac{y_1^2}{2}\Big) \,
                        u^0\Big(x+\frac{(b_1(0, 0) \bar{y})^2}{2y_1^2},0\Big)
                        dy_1,  
                        \label{pbl_zero_compati_app}
\end{align}
where $\bar{y} = {y}/{\sqrt{\ep}}$ is the stretched variable in the parabolic layer near the bottom boundary at $y = 0$, and 
\begin{align}
    {\theta}_L 
    =
    \sig_L(x) \ov{\theta}_L, 
    \qquad \qquad
        &
        \ov{\theta}_L
        = 
            - u^0(0,y)  \exp\left(- b_1 (0, y) \frac{x}{\ep}\right),\label{theta0_explicit_bar}\\
    {{\zeta}}_B 
    =
    \sig_L(x) \sig_B(y) {\ov{\zeta}}_B, 
    \qquad \qquad
        &
        {\ov{\zeta}}_B
        = 
            - {{\varphi}_B}\big(0, y \big) 
            \exp\left(- b_1 (0, y) \frac{x}{\ep}\right).\label{zeta_explicit_bar_zero}
\end{align}
By the symmetry, the corrector $\varphi_T$ is defined by 
$\varphi_B$ with $y$ replaced by $1-y$, 
and 
$\zeta_T$ by $\zeta_B$ with
$\varphi_B$ and  $y$ replaced by $\varphi_T$ and $1-y$. 

The correctors, $\varphi_B$, $\tht_L$, and $\zeta_B$ defined above,  satisfy the equations, 
\begin{align}\label{para_charac_eqn}
        \left\{
                \begin{array}{rl}
                	\spacer
		-\ep \dfrac{\partial^2 {\varphi_B}}{\partial y^2} 
        - b_1 (0, 0) \dfrac{\partial  \varphi_B}{\partial x} 
        &
        = 
                    e.s.t.,
                    \\
                \spacer
                 {\varphi_B} 
                 &
                 = - u^0(x,0),
                 \quad
                 \text{at $y=0$},\\
                 \spacer
                 {\varphi_B} 
                 &
                 = \varphi_B(0, y) \neq0,
                 \quad
                 \text{at $x=0$},\\
                 {\varphi_B} 
                 &
                 = 0,
                 \quad
                 \text{at $x=1$ or $y=1$},
                \end{array}
        \right.
\end{align}
\begin{align}\label{ord_charac_eqn}
        \left\{
                \begin{array}{rl}
                	\spacer
		-\ep \dfrac{\partial^2 { {\theta}_L}}{\partial x^2} 
        - b_1 (0, y) \dfrac{\partial  { \theta}_L}{\partial x} 
        &
                    = 
                    e.s.t.,
                    \\
                \spacer
                 {{\theta}_L} 
                 &
                 = - u^0(0,y),
                 \quad
                 \text{at $x=0$},\\
                 \spacer
                 {{\theta}_L} 
                 &
                 = - \sig_L(x)u^0(0,0) \exp\left(- b_1 (0, 0)x/\ep \right),
                 \quad
                 \text{at $y=0$},\\
                 {{\theta}_L} 
                 &
                 = 0,
                 \quad
                 \text{at $x=1$ or $y=1$},
                \end{array}
        \right.
\end{align}
and 
\begin{align}\label{corn_charac_eqn}
        \left\{
                \begin{array}{rl}
                	\spacer
		-\ep \dfrac{\partial^2 { {{\zeta}}_B}}{\partial x^2} 
        - b_1 (0, y) \dfrac{\partial  {{\zeta}}_B}{\partial x} 
        &
                    = 
                    e.s.t.,
                    \\
                \spacer
                 {{{\zeta}}_B} 
                 &
                 = 
                    -\varphi_B(0, y)
                 \quad
                 \text{at $x=0$},\\
                 \spacer
                 {{{\zeta}}_B} 
                 &
                 = 
                    - \tht_L(x) (x,0), 
                 \quad
                 \text{at $y=0$},\\
                 {{{\zeta}}_B} 
                 &
                 = 0,
                 \quad
                 \text{at $x=1$ or $y=1$}.
                \end{array}
        \right.
\end{align}
By the symmetry, 
the corrector $\varphi_T$ is satisfies the equation (\ref{para_charac_eqn}) of 
$\varphi_B$ with $y = 0$ replaced by $y = 1$, 
and 
$\zeta_T$ satisfies (\ref{corn_charac_eqn}) of $\zeta_B$ with
$\varphi_B$ and  $y = 0$ replaced by $\varphi_T$ and $y = 1$.

Thanks to our construction of the correctors, 
we finally notice that the difference between $u$ 
and the proposed expansion is 
balanced on the boundary, i.e.,  
\begin{equation}\label{e:boundary_zero_char}
        u  - (u^0 
        +
        \varphi_B + \varphi_T
        +
        \tht_L
        +
        \zeta_B + \zeta_T)
        			=
			0,
			\quad
			\text{on }
			\pa \Omega.
\end{equation}

In order to validate our asymptotic expansion of $u$, 
we introduce
the difference between $u$ 
and the proposed expansion,
\begin{equation}\label{e:CH4_2_w0_eq}
    w
		=
		u
		-
		(u^0 
        +
        \varphi_B + \varphi_T
        +
        \tht_L
        +
        \zeta_B + \zeta_T),
\end{equation}
and perform the energy estimates 
as we did in the previous section for the non-characteristic boundary case. 
As a result, 
we recall the following convergence result from Theorem 4.1 in  \cite{Book16}:

\begin{theorem}[{\bf Characteristic boundary case}]\label{char_thm}
Assuming the data $f$ and $\blds{b}$ are sufficiently regular, 
the solution $u$ to (\ref{con_dif}) with $b_1 > 0$ and $b_2 = 0$ (\textit{characteristic boundary case})  
satisfies the asymptotic expansion in (\ref{e:exp_char}) 
in the sense that  
\begin{align}\label{e:validity_char}
\|
    u - 
        (u^0 
        +
        \varphi_B + \varphi_T
        +
        \tht_L
        +
        \zeta_B + \zeta_T
        )
\|_\ep 
        \leq 
            \kap \ep^{\frac{3}{4}}.
\end{align}
Moreover, 
$u$ converges to $u^0$ as the perturbation parameter $\ep$ tends to zero as
\begin{align}\label{e:VVL_char}
\|
    u - u^0 
\|_{L^2(\Omega)} 
        \leq 
            \kap \ep^{\frac{1}{4}}.
\end{align}
\end{theorem}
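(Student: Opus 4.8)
The statement is Theorem~4.1 of \cite{Book16}, and the plan is to reproduce its proof, following the same template just used for Theorem~\ref{non_char_thm}. With $w$ as in (\ref{e:CH4_2_w0_eq}), the first step is to derive the boundary value problem solved by $w$. Subtracting from (\ref{con_dif}) (with $b_2=0$) the reduced equation (\ref{con_dif_limit_char}) and the corrector equations (\ref{para_charac_eqn}), (\ref{ord_charac_eqn}), (\ref{corn_charac_eqn}) --- together with their counterparts for $\varphi_T$ and $\zeta_T$ obtained from the $y\mapsto 1-y$ symmetry --- and using the boundary cancellation (\ref{e:boundary_zero_char}), one obtains the characteristic-case analogue of (\ref{e:w_eq_zero}),
\begin{equation*}
\left\{
\begin{array}{rl}
-\ep\Delta w - b_1\dfrac{\pa w}{\pa x} = \mathcal{R} + e.s.t., & \ \text{in }\Omega,\\
w = 0, & \ \text{on }\pa\Omega,
\end{array}
\right.
\end{equation*}
where $\mathcal{R}$ gathers exactly three kinds of leftover terms: (i) $\ep\Delta u^0$, the diffusion of the reduced solution; (ii) the ``unbalanced'' second derivatives $\ep\,\pa_{xx}\varphi_B$, $\ep\,\pa_{yy}\tht_L$, $\ep\,\pa_{yy}\zeta_B$ and their $T$-analogues, i.e.\ the diffusion term absent from each corrector equation; and (iii) the frozen-coefficient remainders $\big(b_1(x,y)-b_1(0,0)\big)\pa_x\varphi_B$, $\big(b_1(x,y)-b_1(0,y)\big)\pa_x\tht_L$, $\big(b_1(x,y)-b_1(0,y)\big)\pa_x\zeta_B$ and their $T$-analogues, which appear because each corrector equation freezes the convection coefficient.

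The second step is the energy estimate, run exactly as for Theorem~\ref{non_char_thm}: multiply by $e^{x}w$ (replacing $e^x$ by a steeper weight $e^{\lambda x}$, $\lambda$ large, if needed for coercivity) and integrate over $\Omega$. Since $w$ vanishes on $\pa\Omega$, integration by parts turns the left-hand side into $\ep\int_\Omega e^{x}|\nabla w|^2+\tfrac12\int_\Omega(b_1+\pa_x b_1-\ep)e^{x}w^2$, so that, by Cauchy--Schwarz and Young's inequality,
\begin{align*}
\ep\,\|\nabla w\|_{L^2(\Omega)}^2 + c\,\|w\|_{L^2(\Omega)}^2
&\le \kappa\,\Big|\int_\Omega(\mathcal{R}+e.s.t.)\,w\,dxdy\Big| \\
&\le \kappa\,\|\mathcal{R}\|_{L^2(\Omega)}^2 + \tfrac{c}{2}\,\|w\|_{L^2(\Omega)}^2 + e.s.t.,
\end{align*}
the $e.s.t.$ being negligible in the norm (\ref{energy}). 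Everything then reduces to the bound $\|\mathcal{R}\|_{L^2(\Omega)}\le\kappa\,\ep^{3/4}$, which gives $\ep\|\nabla w\|_{L^2(\Omega)}^2+c\|w\|_{L^2(\Omega)}^2\le\kappa\,\ep^{3/2}$ and hence (\ref{e:validity_char}).

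The third step --- and the one I expect to be the real obstacle --- is this estimate on $\mathcal{R}$, which rests on the corrector bounds recalled from Section~4.1.1 of \cite{Book16}. The term in family (i) is harmless, $\|\ep\Delta u^0\|_{L^2(\Omega)}\le\kappa\,\ep$, since $u^0$ is $\ep$-independent and smooth. For the ordinary-layer pieces in (ii)--(iii) one uses that $\tht_L$ and $\zeta_B$ are exponential in $x/\ep$, hence essentially supported on a strip of width $O(\ep)$; together with $|b_1(x,y)-b_1(0,y)|\le\kappa\,x$, the concentration of $\pa_x\tht_L$, and --- where needed --- an integration by parts in $x$ inside $\int_\Omega(b_1(x,y)-b_1(0,y))\pa_x\tht_L\,e^{x}w\,dxdy$ exploiting $w(0,y)=0$, these pieces contribute at order $\ep^{3/4}$ or better. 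The genuinely delicate pieces involve the \emph{parabolic} correctors $\varphi_B,\varphi_T$ and the corner correctors $\zeta_B,\zeta_T$: working in the stretched variable $\bar y=y/\sqrt\ep$, on which $\varphi_B$ is Gaussian-like with effective width $O(\sqrt\ep)$, one has $\|\varphi_B\|_{L^2(\Omega)}\le\kappa\,\ep^{1/4}$, $\|\pa_x\varphi_B\|_{L^2(\Omega)}\le\kappa\,\ep^{1/4}$, $\|\ep\,\pa_{xx}\varphi_B\|_{L^2(\Omega)}\le\kappa\,\ep^{5/4}$, and the convective mismatch must be controlled at order $\ep^{3/4}$; this is where the explicit error-function-type formula (\ref{pbl_zero_compati_app}) for $\ov{\varphi}_B$ is used --- its decay in $\bar y$, the structure of $\pa_x\ov{\varphi}_B$, the vanishing $u^0(1,0)=0$, the splitting $b_1(x,y)-b_1(0,0)=\big(b_1(x,y)-b_1(x,0)\big)+\big(b_1(x,0)-b_1(0,0)\big)$ whose first piece is $O(y)=O(\sqrt\ep)$ on the strip, and, where appropriate, a trace/Poincar\'e inequality in $y$ built on $w(x,0)=0$. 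Assembling (i)--(iii) yields $\|\mathcal{R}\|_{L^2(\Omega)}\le\kappa\,\ep^{3/4}$, proving (\ref{e:validity_char}).

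Finally, (\ref{e:VVL_char}) follows from (\ref{e:validity_char}) and the triangle inequality, using that the correctors are themselves small:
\begin{align*}
\|u-u^0\|_{L^2(\Omega)}
&\le \|w\|_{L^2(\Omega)} + \|\varphi_B\|_{L^2(\Omega)} + \|\varphi_T\|_{L^2(\Omega)} \\
&\quad{} + \|\tht_L\|_{L^2(\Omega)} + \|\zeta_B\|_{L^2(\Omega)} + \|\zeta_T\|_{L^2(\Omega)} \le \kappa\,\ep^{1/4},
\end{align*}
the dominant term on the right being that of the parabolic correctors, of order $\ep^{1/4}$; every other term is of order $\ep^{3/4}$ or smaller.
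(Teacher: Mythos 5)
Your proposal is correct and follows exactly the route the paper itself points to: the paper does not prove Theorem \ref{char_thm} but recalls it from Theorem 4.1 of \cite{Book16} with the remark that one ``performs the energy estimates as we did in the previous section,'' and your argument is precisely that --- the equation for $w$ with remainder split into $\ep\Delta u^0$, the missing diffusion terms of each corrector equation, and the frozen-coefficient convective mismatches, followed by the weighted ($e^{\lambda x}$) energy estimate and the bound $\|\mathcal{R}\|_{L^2(\Omega)}\le\kappa\,\ep^{3/4}$. Your corrector bounds (in particular $\|\varphi_B\|_{L^2(\Omega)}\sim\ep^{1/4}$ from the $\bar y=y/\sqrt{\ep}$ scaling, which is what limits (\ref{e:VVL_char}) to $\ep^{1/4}$) and the final triangle-inequality step match the cited source, so no substantive discrepancy with the paper's (delegated) proof remains.
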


\begin{rem}\label{r:improved_char}
\textnormal{
The convergence rate (\ref{e:validity_char}) can be improved by imposing certain compatibility conditions on the data $f$. 
More precisely, we have:  
\begin{itemize}
    \item[1.] 
            If the data $f$ satisfies $f(1,0)=f(1,1)=0$, we have
            $$ 
            \|u - (u^0 
                    +
                    \varphi_B + \varphi_T
                    +
                    \tht_L
                    +
                    \zeta_B + \zeta_T
            )\|_\ep 
                \leq \kap \ep. 
            $$
    \item[2.]
            In case when $f(x,0)=f(x,1)=0$, $0 < x < 1$, then 
            we infer from (\ref{limit_sol_compati}) that $u^0(x,0)=u^0(x,1)=0$, and hence 
            we do not need to include 
            ${\varphi}_B$, ${\varphi}_T$, ${\zeta}_B$, or ${\zeta}_T$ in our assymptotic expansion (\ref{e:exp_char}). That is, we have, for this case, 
            $$ 
            \|u - (u^0 
                    +
                    \tht_L
            )\|_\ep 
                \leq \kap \ep.  
            $$
    \item[3.]
            All the estimates above remain valid with the correctors $\varphi$, $\tht$, and $\zeta$ 
            replaced by their approximations without cut-off functions $\sig_L$ and $\sig_B$ 
            denoted by 
            $\ov{\varphi}$, $\ov{\tht}$, and $\ov{\zeta}$. 
\end{itemize}
}
\end{rem}

\section{Numerical experiments}\label{sec_numerics}
For all the numerical experiments performed in this article below, we use the following setting, if not specified:
\begin{enumerate}
\item Number of epochs: EPOCHS = 1000.
\item Number of neurons: n = 32.
\item Loss function is defined by using the mean squared error, such as MSELoss in PyTorch.
\item Number of collocation points to evaluate the loss function: $N\times N$ with $N = 50$. That is, we choose $(x_i,y_j)\in\bar{\Omega}$, $i,j=1,2,\cdots,50$.
\end{enumerate}

Our main goal in this section is to construct semi-analytic 
(two-layer) Physics Informed Neural Networks (PINNs) to approximate the solution $u$ to (\ref{con_dif}) especially when the diffusion parameter $\ep>0$ is very small. 
It's important to note that our approach utilizes the two-layer neural network. This is not only computationally cost-effective but also sufficient for obtaining accurate numerical approximations. Moreover, our methodology can be conveniently expanded to an $M$-layer neural network by employing a symbolic computation.
The detailed construction of these new semi-analytic PINNs, 
hereafter we call {\it Singular Layer PINNs} (SL-PINNs), appear below for both {\it characteristic} boundary and {\it non-characteristic} boundary cases in Sections \ref{non-char} and \ref{char}, but we emphasize here that the main idea of our SL-PINNs is embedding the corrector (which describe the stiff part of the solution $u$) in the structure of the solution $u$ and let the neural network learn the remaining smooth part of the solution by incorporating with the corrector. 
To construct our SL-PINNs, which will be verified below as highly efficient networks for (\ref{con_dif}) (or any other singular perturbation problems), we first recall the 
following two-layer (usual) PINNs: 

Enforcing the zero boundary conditions to the approximation, 
we approximate $u$, sol. of (\ref{con_dif}), 
by the usual PINN approximation $\widetilde{u}$:
\begin{subequations}\label{nn}
\begin{align}
\widetilde{u}(x,y; \Tht)= x(1-x)y(1-y)\hat{u}(x, y ; \Tht), 
\qquad
\textit{(PINN approx. of $u$)}, 
\label{nn_app}
\end{align}
where  
\begin{align}\label{nn_hat}
\hat{u}(x,y; \Tht) 
    = \sum_{j=1}^n c_j\sigma(w_{1j} x + w_{2j} y + b_j), 
\quad
    \text{$n=$ number of neurons}.  
\end{align}
The network parameters are denoted by 
$$
\Tht = (w_{11}, \cdots, w_{1n}, 
        w_{21}, \cdots, w_{2n}, 
        b_{1 }, \cdots, b_{ n}, 
        c_{1 }, \cdots, c_{n}
        ), 
$$
and 
we choose the logistic sigmoid as an activation function,
\begin{equation}\label{e:sigmoid}
    \sigma(s) 
        =
            1/(1 + e^{-s}).
\end{equation}
\end{subequations}

As it is well-known to the community of PINNs and machine learning and as it is 
well-investigated in \cite{GHJ} for some 1D singular perturbation problems, 
when the singular perturbation problem (\ref{con_dif}) exhibits a stiff boundary layers at a small diffusivity $\ep$, 
the PINN method introduced above in (\ref{nn}), 
(or any other version of PINNs without any special treatment near the boundary layers), fails to 
an approximate solution to the problem. 
In fact, the usual PINNs like (\ref{nn}) (with multiple layers and more neurons) do not produce a  
reasonable approximation to the solution of any example in the following sections when the diffusivity parameter $\ep$ is relatively smaller than the mesh size determined by the number of collocation points.

We demonstrate below how we 
modify the usual PINNs, $\widetilde{u}$, in (\ref{nn}) and construct our new semi-analytic SL-PINNs for the singularly perturbed problem (\ref{con_dif}). 
That is, we enrich the PINNs with 
various types of boundary layer correctors as discussed in Sections \ref{non-char} and \ref{char}, depending on the nature of the boundaries, i.e., whether they are characteristic or non-characteristic.

We start with the {\bf \em non-characteristic boundary case}, for which 
the corresponding corrector functions are given in a simpler form compared to the case of characteristic boundary case.

\subsection{Non-characteristic boundary case ($b_1 ,b_2 > 0$)}\label{non-char}
 
When $b_1 ,b_2 > 0$, i.e., the boundaries are non-characteristic, as demonstrated in Section \ref{sec_non_char} and supported by Theorem \ref{non_char_thm}, 
the corrector functions for the boundary layers associated with (\ref{con_dif}) take the form of exponential functions (OBLs, CBLs) 
and hence they can be readily incorporated into the PINNs framework. 
The SL-PINNs  for this non-characteristic boundary case are  proposed in the following sections.

\subsubsection{Experiment 1: constant coefficients $b_1=b_2=1$}
For non-characteristic boundaries, in the experiment, We consider the problem (\ref{con_dif})) with 
$b_1=b_2=1$:
\begin{equation}\label{eq_constant_coeff}
    \left\{
    \begin{array}{rl}
        \spacer
        - \ep\Delta u 
        - \dfrac{\pa u}{\pa x} 
        - \dfrac{\pa u}{\pa y} = f, & \text{in $\Omega$}, \\
        u = 0, & \text{on $\partial\Omega$}.
    \end{array}
    \right.
\end{equation}
As indicated in Theorem \ref{non_char_thm}, to absorb the sharpness
of boundary layers, we write
\begin{subequations}\label{sl-pinn}
\begin{align}
u &=  h(x,y) -  h(0,y)e^{-x/\ep} + \mathcal{O}(\ep),
\end{align} 
where
\begin{align}
h(x,y) = u^0(x,y) - u^0(x,0)e^{-y/\ep}.
\end{align}
\end{subequations}
To approximate the solutions $u$ to (\ref{eq_constant_coeff}), 
dropping the $\mathcal{O}(\ep)$ and replacing $u^0$ by the neural net $\hat{u}$ in (\ref{sl-pinn}),
we propose 
the SL-PINN, 
\begin{subequations}\label{non_charac_model}
\begin{align}
    \widetilde{u}(x,y) 
        &=
            (x-1) \, 
            ( 
                A({ {x,y}})
            -
                A({ {0,y}}) \,
                e^{-x/\ep}
            ), \\
       A(x,y) 
        &=
            (y-1) \, 
            ( 
                \hat{u}({ {x,y}})
            -
                \hat{u}({ {x,0}}) \,
                e^{-y/\ep}
            ).          
\end{align}
\end{subequations}
Here, we impose the zero boundary condition to $\widetilde{u}$ 
at the inflows, $x=1$ or $y=1$. 

When we compute the loss function of SL-PINNs,  
using the fact that 
\begin{equation}\label{e:derivative_sigmoid}
\begin{array}{rlcrl}
    \sig^\pri(s)
        &
        \spacer
        =
            \sig(s) \big(
                        1 - \sig(s)
                    \big), 
    &&
    \sig^{\pri \pri}(s)
        &
        =
            \sig (s) 
                    \big(
                        1 - \sig(s)
                    \big)
                     \big(
                        1 - 2 \sig(s)
                    \big), 
\end{array}
\end{equation}
we use the explicit form of derivatives, e.g.,
\begin{equation}\label{e:NN_modified_der}
\begin{array}{rl}
\spacer
    \dfrac{\partial \hat{u}}{\partial x}
    \displaystyle 
    \spacer
    &
    \displaystyle
    =
        \sum_{j=1}^n
          c_j w_{1j}\sigma^{\pri}(w_{1j} x + w_{2j} y + b_j),\\
\dfrac{\partial^2 \hat{u}}{\partial x^2}
    \displaystyle 
    \spacer
    &
    \displaystyle
    =
        \sum_{j=1}^n
          c_j (w_{1j})^2\sigma^{\pri \pri}(w_{1j} x + w_{2j} y + b_j).
\end{array}
\end{equation}

For a first example, we investigate the singular behaviors of the solutions $u$ by computing the asymptotic errors and rates, which are shown in Table \ref{tb3} and Figure \ref{fig:plain4}. To do so, we choose $u^0 = (1-x)(1-y)$ as a limit solution to (\ref{eq_constant_coeff}) for $\ep=0$, and obtain the function $f=2-x-y$ accordingly. Specifically, we compute the $L^2$ and $L^\infty$ norms of the asymptotic error, $u - u^0 - \ov{\theta}_B - \ov{\theta}_L - \ov{\eta}$, i.e.,
\begin{align}\label{asym}
\widetilde{u} - u^0 + u^0(0,y)e^{-x/\ep} + u^0(x,0)e^{-y/\ep} - u^0(0,0)e^{-(x+y)/\ep},
\end{align}
where $\widetilde{u}$ is from SL-PINN (\ref{non_charac_model}) and $u^0 = (1-x)(1-y)$. Then, the estimates for the asymptotic error (\ref{asym}) are indicated in Theorem \ref{non_char_thm}. Figure \ref{fig:plain4} illustrates that the errors decrease as $\ep$ approaches zero.

 \begin{table}[H]
{\small
\begin{tabular}{|l|l|l|l|l|l|l|}
\hline
  & $N^2$
 &  $\ep = 10^{-1}$ & $\ep = 10^{-2}$  & $\ep = 10^{-3}$  & $\ep = 10^{-4}$   \\ \hline
$L^2(\Omega)$ & $100^2$ & $1.221 \times 10^{-1}$ & $2.125\times 10^{-3}$  & $2.247 \times 10^{-3}$ & $ 4.869 \times 10^{-3}$  \\ 
& $400^2$ & $1.529 \times 10^{-1}$ & $2.339 \times 10^{-3}$ & $1.095\times 10^{-3}$ & $8.316 \times 10^{-4}$ \\ \hline
$L^\infty(\Omega)$ & $100^2$ & $4.861 \times 10^{-1}$ & $2.413 \times 10^{-2}$  & $1.016 \times 10^{-2}$ & $ 1.138 \times 10^{-2}$  \\ 
& $400^2$ & $6.220 \times 10^{-1}$ & $2.408 \times 10^{-2}$ & $3.928 \times 10^{-3}$ & $2.6149 \times 10^{-3}$ \\ \hline
\end{tabular} \vspace{1mm}
\caption{$L^2$ and $L^\infty$ norms of the asymptotic error (\ref{asym}); $f=2-x-y$, EPOCHS $=  600$,  n = 32, where $N^2$ is the number of collocation points.} \label{tb3}
}

\end{table}


\begin{figure}[h!]
     \centering
     \begin{subfigure}[b]{0.65\textwidth}
         \centering
         \includegraphics[width=\textwidth]{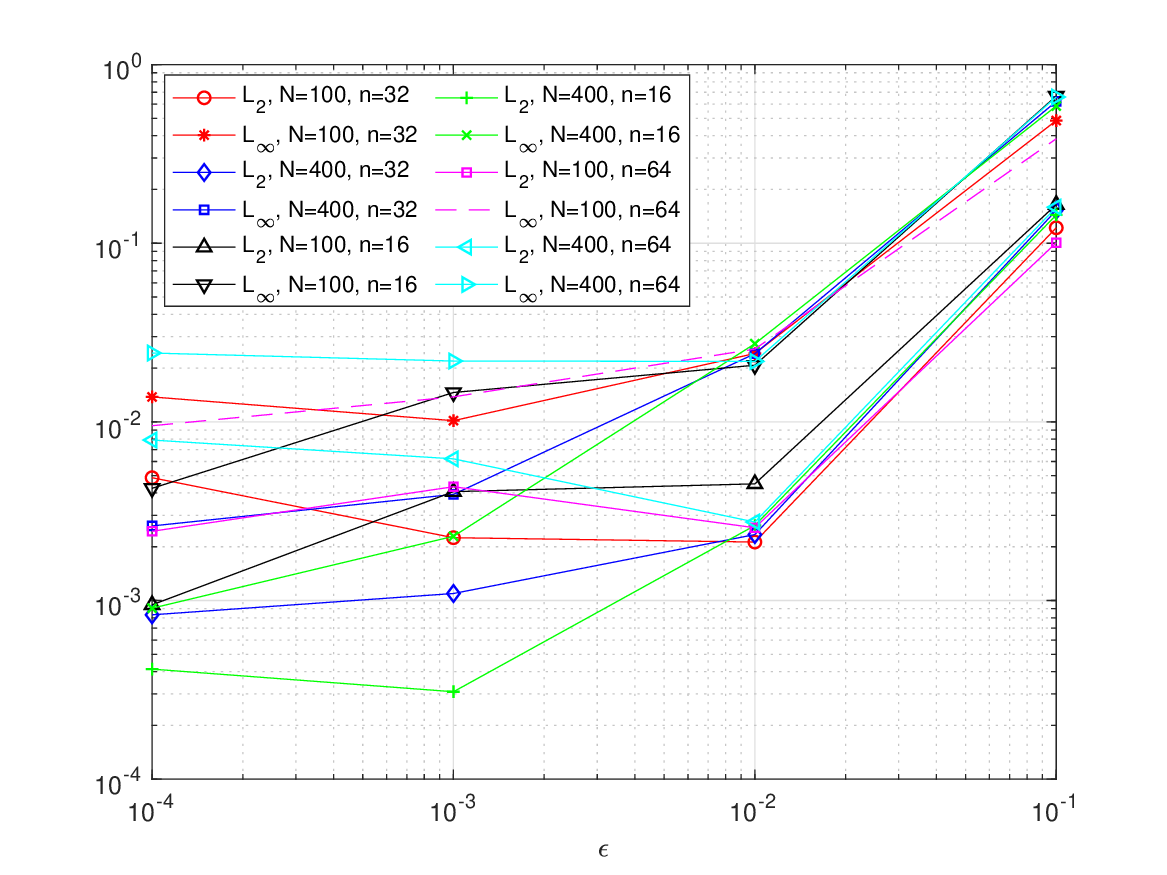}
     \end{subfigure}
     \caption{The $L^2(\Omega)$ and $L^\infty(\Omega)$ of (\ref{asym}) are presented with EPOCHS $=  600$, where $n$ and $N$ denote the number of neurons and collocation points, respectively. The numerical values for $n=32$ are shown in Table \ref{tb3}.}
        \label{fig:plain4}
\end{figure}

For another example, we take $f=1$. The surface plot of the corresponding solution is presented in (A), Figure \ref{fig:plain3} where the sharpness near the boundaries $x=0$, or $y=0$ are well captured.

\begin{figure}[h!]
     \centering
     \begin{subfigure}[b]{0.49\textwidth}
         \centering
         \includegraphics[width=\textwidth]{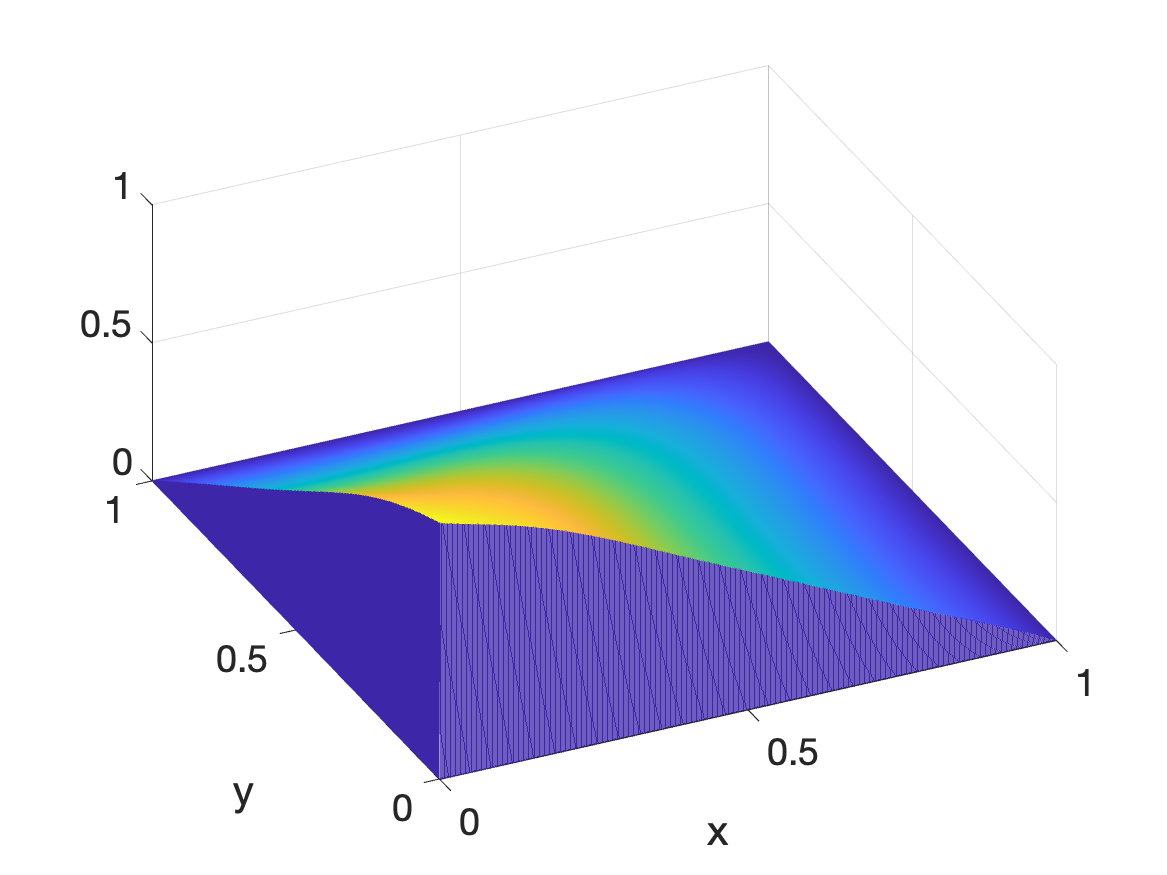}
         \caption{SL-PINN (\ref{non_charac_model}) for Problem (\ref{eq_constant_coeff}) \\ with $f=1$, $\ep= 10^{-5}$; EPOCHS $=  600$}
     \end{subfigure}
     \begin{subfigure}[b]{0.49\textwidth}
         \centering
         \includegraphics[width=\textwidth]{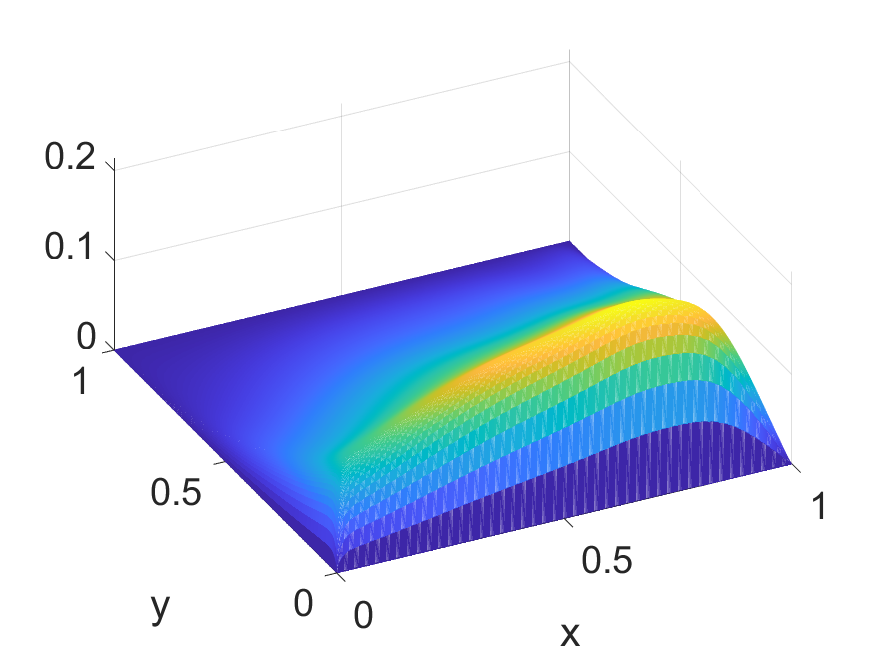}
         \caption{SL-PINN (\ref{cd_var_nn}) for Problem (\ref{cd_var}) \\ with $f=x$, $\ep= 10^{-2}$; EPOCHS $=  1000$}
     \end{subfigure}
     \caption{Surface plots of SL-PINN approximations $\widetilde{u}$.}
        \label{fig:plain3}
\end{figure}

\subsubsection{Experiment 2: variable coefficients $b_1,b_2>0$}
We consider  the convection-diffusion equation (\ref{con_dif}) with 
$b_1 =\cos(\frac{x}{2}+y)$,
$b_2 = \exp(x+y)$: 
\begin{equation}\label{cd_var}
    \left\{
    \begin{array}{rl}
        \spacer
        - \ep\Delta u 
        - \cos\Big(\dfrac{x}{2}+y \Big) 
            \dfrac{\pa u}{\pa x} 
        - \exp(x+y)
            \dfrac{\pa u}{\pa y} = f, & \text{in $\Omega$}, \\
        u = 0, & \text{on $\partial\Omega$}.
    \end{array}
    \right.
\end{equation}

We notice that 
$b_1 =\cos(x/2+y )>0$ and $b_2 = \exp(x+y)> 0$ in $\Omega$ and thus, 
following the boundary layer analysis in  Section \ref{sec_non_char}, 
we find that 
\begin{subequations}
\begin{align}\label{theta01}
     {\ov{\theta}_B} &= - u^0(x,0) \exp\left(- \exp (x)\frac{y}{\ep}\right),\\
     {\ov{\theta}_L} &= - u^0(0,y) \exp\left(- \cos(y)\frac{x}{\ep}\right),\\
     {\ov{\eta}} &= u^0(0,0)\exp\left(- \frac{\cos(y)x+\exp(x)y}{ \ep}\right).
\end{align}
\end{subequations}
As done in (\ref{sl-pinn}) and (\ref{non_charac_model}), 
we use the corrector functions above and propose the  SL-PINN as 
\begin{subequations}\label{cd_var_nn}
 \begin{align}
    \widetilde{u}(x,y ) 
        &=
            (x-1) \, 
            \Big(
                A({ {x,y}})
            -
                A({ {0,y}}) \,
                \exp(-\cos(y)x/\ep)
            \Big),  \\
       A(x,y ) 
        &=
            (y-1) \, 
            \Big( 
                \hat{u}({ {x,y}})
            -
                \hat{u}({ {x,0}}) \,
                \exp(-\exp(x)y/\ep)
            \Big).          
\end{align}
\end{subequations}

The surface plot of solution to (\ref{cd_var}) for $f=x$ is shown in (B), Figure \ref{fig:plain3}. The sharpness of the solutions near the boundaries are well captured.
\bigskip

Now we move to the more interesting 
{\bf \em characteristic boundary case}.

 \subsection{Characteristic boundary case ($b_1  > 0, b_2=0$)}\label{char}

As we observed in Section \ref{S.char}, 
the small parameter $\ep>0$ in the equation (\ref{con_dif}) results in the formation of parabolic boundary layers (PBLs, ${\varphi}_B$ and ${\varphi}_T$) along the characteristic boundaries at $y=0$ and $y=1$. 
In fact, it is more difficult, and on the other hand more interesting, 
to embedding these parabolic correctors in the structure of PINN approximations than the other 
simpler ordinary corrector and corner correctors. 
Hence, as a first step, we consider the case of compatible data $f$, satisfying  $f(x,0)=f(x,1)=0$, 
for which 
the parabolic boundary layers (nor the corner layers) do not occur; see Remark \ref{r:improved_char}. As a result, we only incorporate the correctors for OBLs ($\bar{\theta}$ of $\tht$) in the neural network approximations, as shown in Section \ref{S.char}   below. 
In the following section, Section \ref{S.exp4}, we also examine the case of non-compatible $f$, which may not satisfy $f(x,0)=0$ or $f(x,1)=0$. In this case, we anticipate the formation of the PBLs and corner boundary layers (CBLs, $\bar{\zeta}_B$ of $\zeta_B$ and $\bar{\zeta}_T$ of $zeta_T$) at the intersection of the OBLs and PBLs.

%

\subsubsection{Experiment 3: compatible $f$ s.t.  $f(x,0)=f(x,1)=0$}\label{S.exp3}
We consider the equation (\ref{con_dif}) with 
$b_1=1$, $b_2=0$ and 
$f(x,0)=f(x,1)=0$: 
\begin{equation}\label{compti_f}
    \left\{
    \begin{array}{rl}
        \spacer
        - \ep\Delta u 
        - \dfrac{\pa u}{\pa x} = f, & \text{in $\Omega$}, \\
        u = 0, & \text{on $\partial\Omega$}.
    \end{array}
    \right.
\end{equation}
Since $\bar{\varphi}_B = \bar{\varphi}_T = \bar{\zeta}_B = \bar{\zeta}_T = 0$, as indicated in Remark \ref{r:improved_char}, 
we write
\begin{align}
u &=  u^0(x,y) -  u^0(0,y)e^{-x/\ep} + \mathcal{O}(\ep).
\end{align} 
We thus approximate the solution $u$ to (\ref{compti_f})
by the SL-PINN,
\begin{align}\label{compati_model}
    \widetilde{u}(x,y) 
        &=
            (x-1)y(y-1) \, 
            ( 
                \hat{u}({ {x,y}})
            -
                \hat{u}({ {0,y}}) \,
                e^{-x/\ep}
            ).          
\end{align}

For the experiment, we take $f=\sin(\pi y)$ which is compatible at $y=0,1$, i.e., $f(x,0)=f(x,1)=0$. 
\begin{figure}[h!]
     \centering
     \begin{subfigure}[b]{0.49\textwidth}
         \centering
          \includegraphics[width=\textwidth]{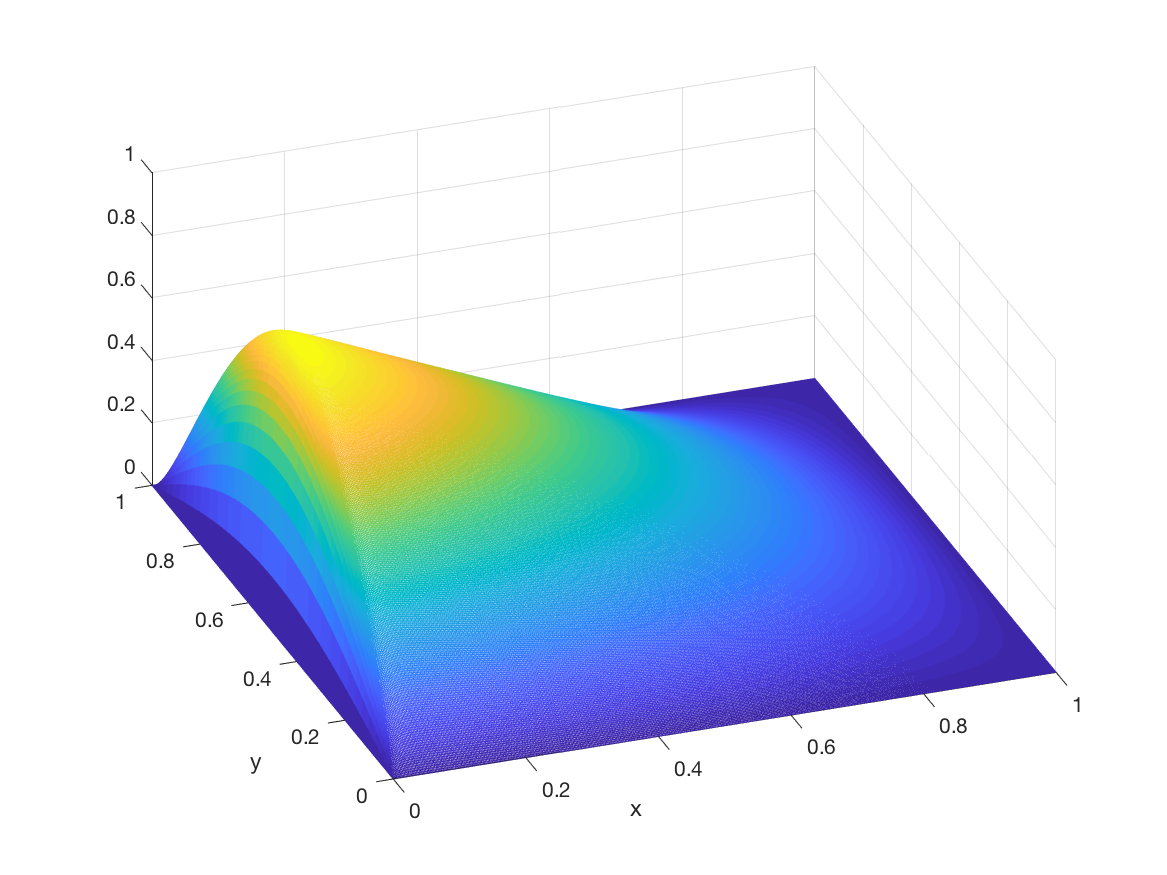}
         \caption{$\ep = 10^{-2}$}
         \label{comp-char-bdry}
     \end{subfigure}
     \begin{subfigure}[b]{0.49\textwidth}
         \centering
          \includegraphics[width=\textwidth]{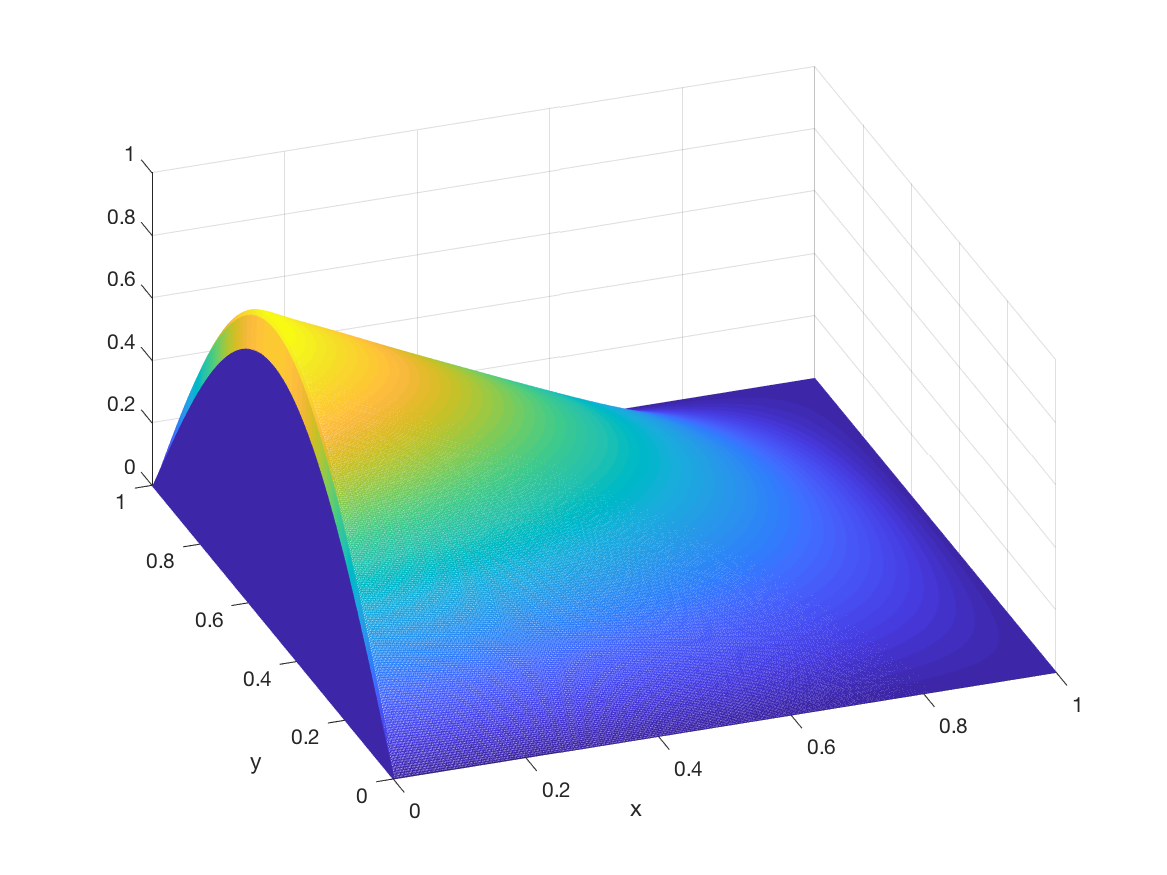}
         \caption{$\ep = 10^{-3}$}
         \label{comp-char-bdry}
     \end{subfigure}
     \caption{The surface plots of the neural net approximation $\widetilde{u}$ 
     by SL-PINN (\ref{compati_model}) for Problem (\ref{compti_f}) with $f=\sin(\pi y)$.}
        \label{fig:plain-1}
\end{figure}

For the comparison purpose, we use the exact solution to (\ref{compti_f}),
\begin{subequations}
\begin{equation}\label{true_sol}
    u(x,y) = (Ae^{r^{+}x} + Be^{r^{-}x} + \frac{1}{\ep\pi^2})\sin(\pi y) 
\end{equation}
where 
\begin{equation}
    A = \frac{e^{r^{-}} - 1}{\ep\pi^2 (e^{r^{+}} - e^{r^{-}})} ;\quad 
    B = \frac{1 - e^{r^{+}}}{\ep\pi^2 (e^{r^{+}} - e^{r^{-}})} ;\quad 
    r^{\pm} = \frac{1}{2\ep}(-1\pm \sqrt{1 + 4\ep^2\pi^2}).
\end{equation}   
\end{subequations}

We obtain the numerical errors $u - \widetilde{u}$ in $L^2(\Omega)$ and $L^\infty(\Omega)$ as shown in Table \ref{err_compati}.
We also plot them in Figure \ref{fig:plain1}.
 \begin{table}[H]
{\small
\begin{tabular}{|l|l|l|l|l|l|l|}
\hline
 & $N^2$
 &  $\ep = 10^{-1}$ & $\ep = 10^{-2}$  & $\ep = 10^{-3}$  & $\ep = 10^{-4}$  & $\ep = 10^{-5}$  \\ \hline
$\|u - \widetilde{u}\|_{L^2(\Omega)}$ &$100^2$ & $1.350 \times 10^{-4}$ & $2.404 \times 10^{-4}$ & $1.693 \times 10^{-3}$ & $1.019 \times 10^{-3}$ & $3.933 \times 10^{-3}$ \\ 
&$400^2$ & $1.538 \times 10^{-4}$ & $1.531 \times 10^{-4}$ & $7.894 \times 10^{-4}$ & $2.457 \times 10^{-3}$ & $3.566 \times 10^{-4}$ \\ \hline
$\|u - \widetilde{u}\|_{L^\infty(\Omega)}$ & $100^2$ & $3.552 \times 10^{-4}$ & $7.675\times 10^{-4}$ & $6.210 \times 10^{-3}$ & $3.355 \times 10^{-3}$ & $1.229 \times 10^{-2}$ \\
& $400^2$ & $3.688\times 10^{-4}$ & $4.002 \times 10^{-4}$ & $2.965 \times 10^{-3}$ & $7.644 \times 10^{-3}$ & $1.186 \times 10^{-3}$ \\ \hline
\end{tabular} \vspace{1mm}
\caption{$L^2$ and $L^\infty$ errors ($\|u - \widetilde{u}\|_{L^2(\Omega)}$ and $\|u - \widetilde{u}\|_{L^\infty(\Omega)}$) for SL-PINN (\ref{compati_model}) and Problem (\ref{compti_f}) with $f=\sin(\pi y)$.} \label{err_compati}
}
\end{table}


\begin{figure}[h!]
     \centering
     \begin{subfigure}[b]{0.49\textwidth}
         \centering
         \includegraphics[width=\textwidth]{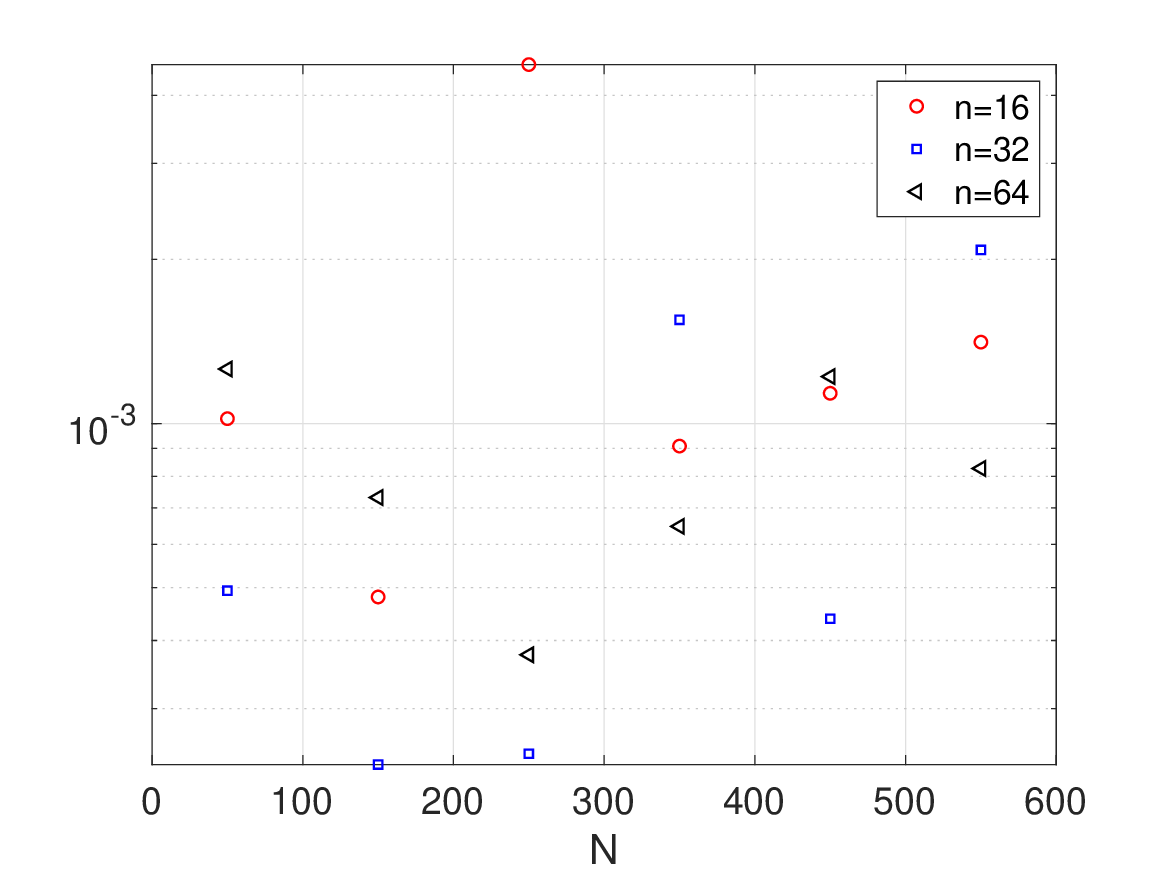}
         \caption{$\|u - \widetilde{u}\|_{L^2(\Omega)}$}
         \label{err_plot_compati}
     \end{subfigure}
     \begin{subfigure}[b]{0.49\textwidth}
         \centering
         \includegraphics[width=\textwidth]{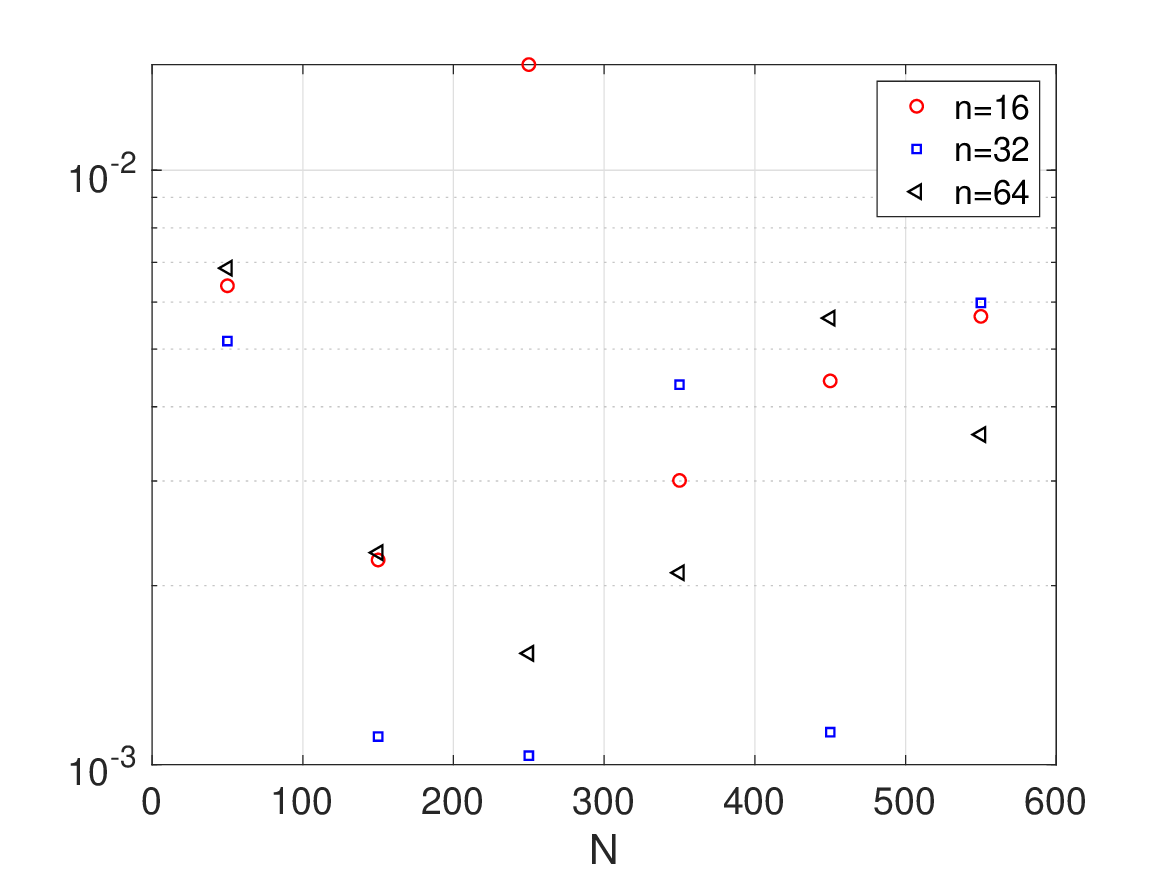}
         \caption{$\|u - \widetilde{u}\|_{L^\infty(\Omega)}$}
         \label{err_plot_compati}
     \end{subfigure}
     \caption{SL-PINN (\ref{compati_model}) for Problem (\ref{compti_f}) with $f=\sin(\pi y)$, $\ep = 10^{-3}$; the number of neurons = $n$ = 16, 32, 64; $N^2$ is the number of collocation points.}
        \label{fig:plain1}
\end{figure}

As we see in Figure \ref{fig:plain1}, it is evident that if the neural network size is too large ($n=64$), the network results in poor interpolation between the collocation points. Conversely, if the network size is too small ($n=16$), it fails to capture the sharpness of the solutions, leading to poor interpolation results. Optimal performance is achieved when $n=32$, resulting in lower $L^2$ and $L^\infty$ errors compared to the other network sizes.


%

\begin{figure}[h!]
     \centering
     \begin{subfigure}[b]{0.49\textwidth}
         \centering
         \includegraphics[width=\textwidth]{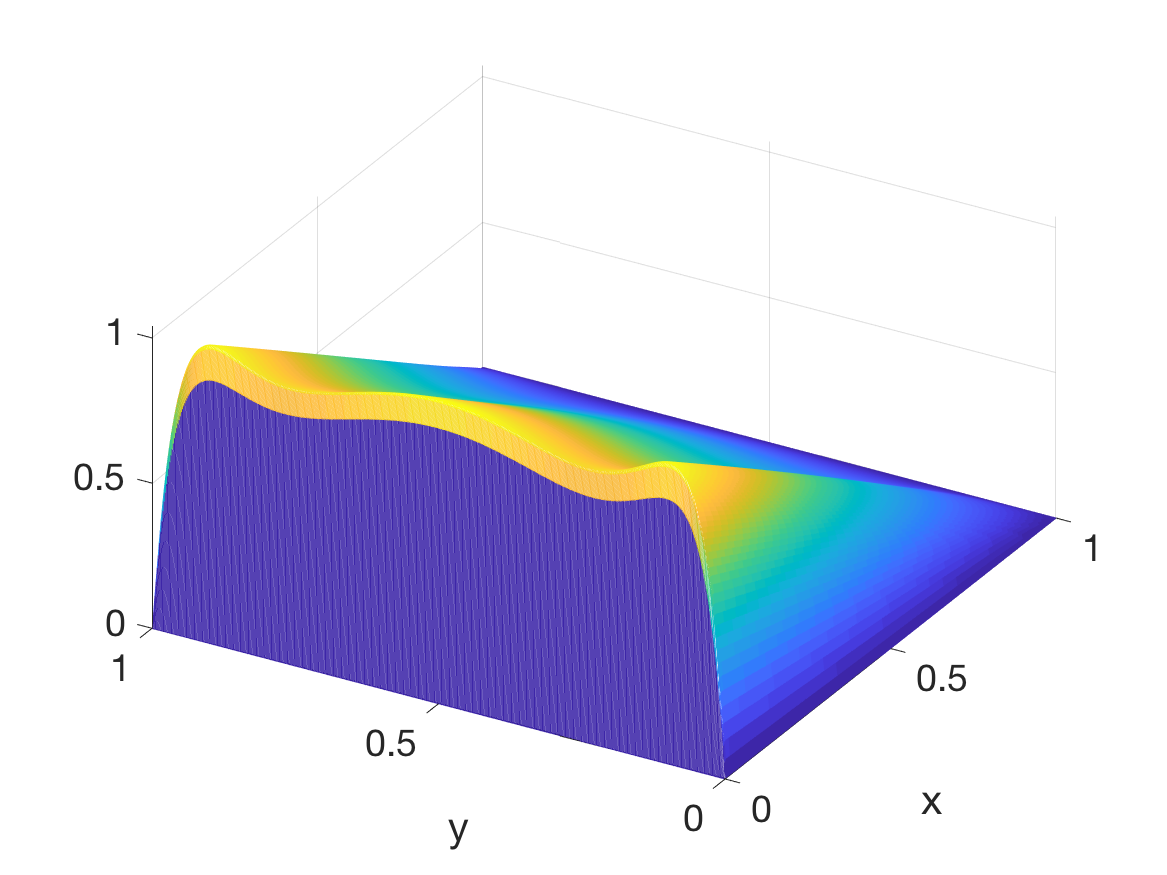}
         \caption{$\widetilde{u}$  in (\ref{compati_model}); EPOCHS=1000}
     \end{subfigure}
     \begin{subfigure}[b]{0.49\textwidth}
         \centering
         \includegraphics[width=\textwidth]{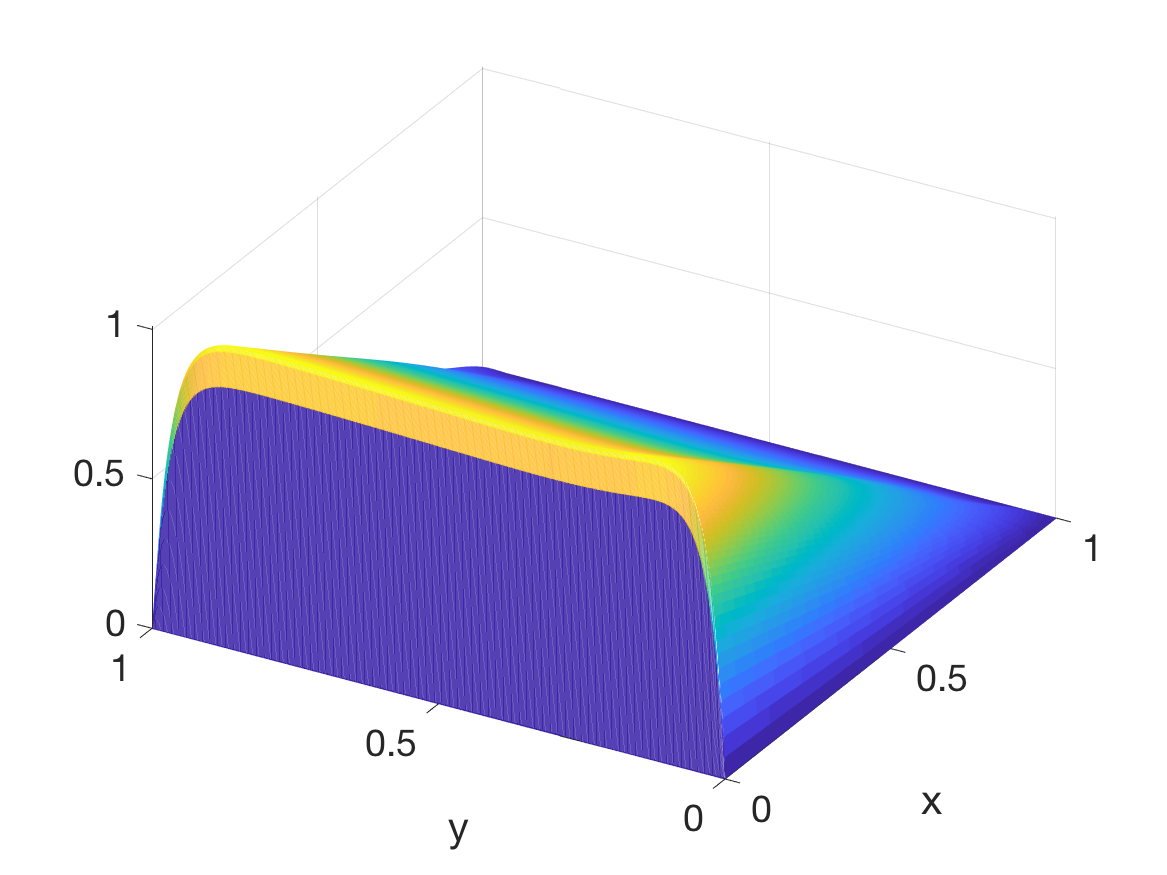}
         \caption{$\widetilde{u}$  in (\ref{compati_model_arti_pbl}); EPOCHS=3000}
         \label{non-comp-char-bdry2}
     \end{subfigure}
     \begin{subfigure}[b]{0.49\textwidth}
         \centering
         \includegraphics[width=\textwidth]{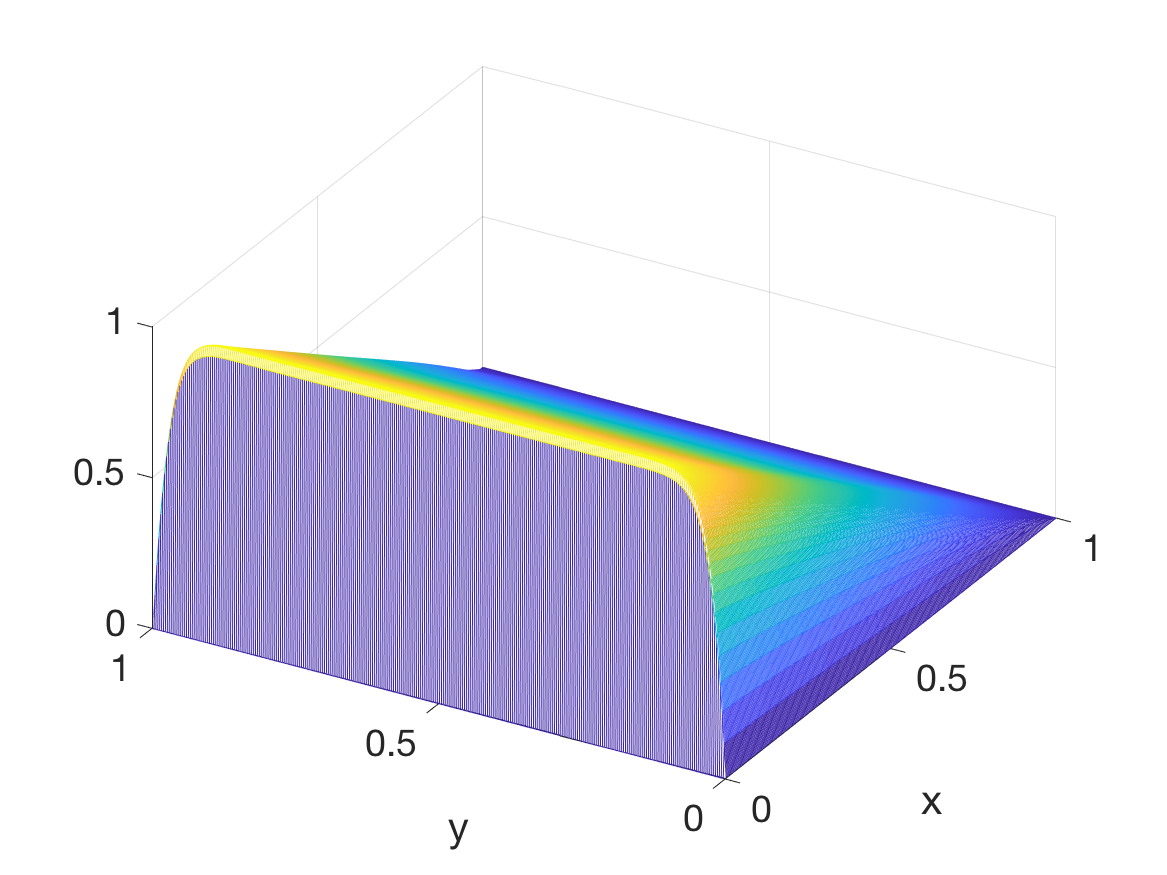}
         \caption{$\widetilde{u}$ in (\ref{pbl_sys_SL-PINN}); EPOCHS=3500}
     \end{subfigure}
     \begin{subfigure}[b]{0.49\textwidth}
         \centering
         \includegraphics[width=\textwidth]{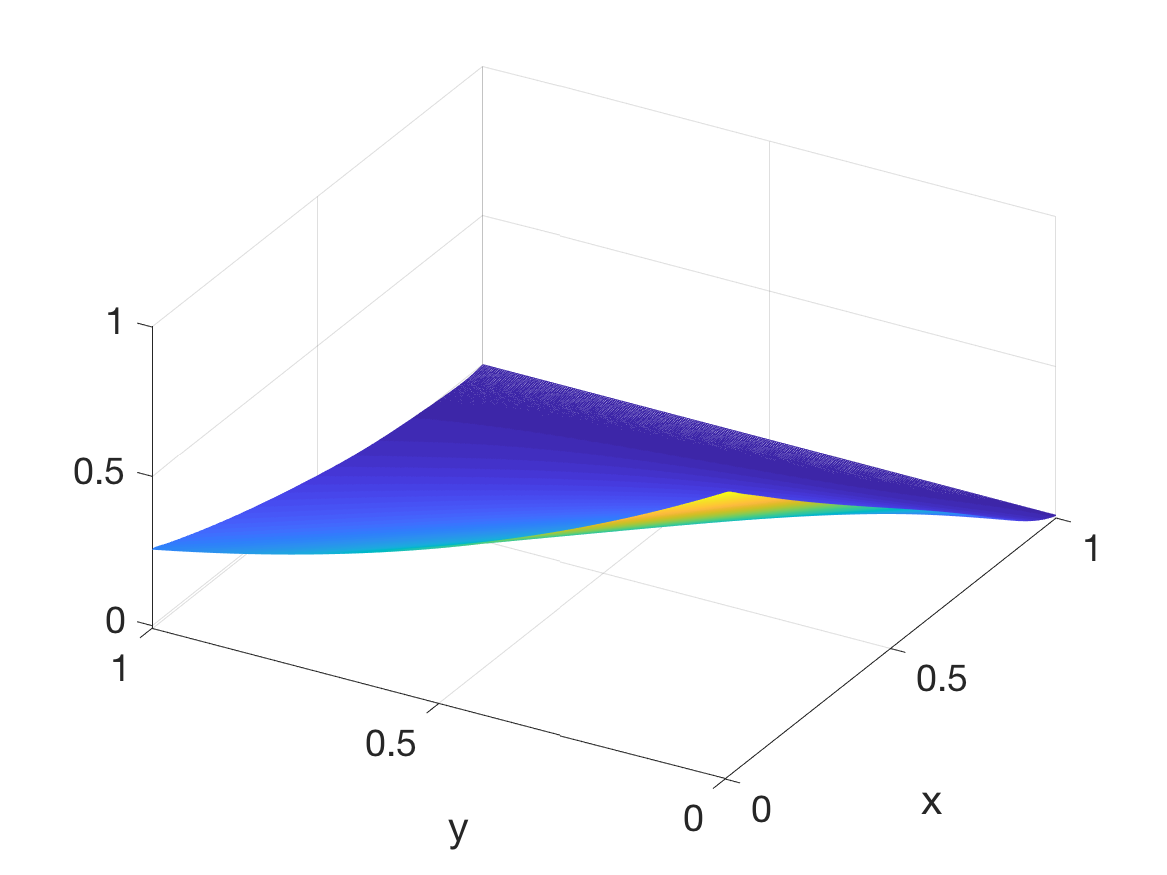}
         \caption{$\widetilde{\varphi}_{B}$ in (\ref{pbl_sys_SL-PINN}), the PBL at $y=0$ for (C)}
     \end{subfigure}     
     \caption{The surface plots of the neural net approximation $\widetilde{u}$  for Problem (\ref{compti_f}) with $f=1, \ep= 10^{-3}$; The number of EPOCHS is chosen to ensure convergence in stochastic optimization.}
        \label{fig:plain3-11}
\end{figure}


\subsubsection{Experiment 4: non-compatible $f$, i.e., $f(x,0)\neq 0$ or $f(x,1)\neq 0$}\label{S.exp4}
If we use the SL-PINN (\ref{compati_model}) for the non-compatible $f$, as in Figure \ref{fig:plain3-11} - (A), 
we observe the large over-shootings near the edges $y=0,1$. As indicated in Theorem \ref{char_thm}, it is due to the parabolic boundary layers (PBLs) and near the corners $(0,0)$, $(0,1)$ due to the corner layers (CBLs).

As our {\em first} and {\em simple} try to resolve the large computational error of 
the SL-PINN (\ref{compati_model}) 
near the edges $y=0,1$, 
we introduce the simple exponentially decaying functions, 
$e^{-y/\sqrt{\ep}}$ and  
$e^{-(1-y)/\sqrt{\ep}}$  near $y=0,1$, and 
construct the 
following SL-PINN enriched with those functions:
\begin{subequations}\label{compati_model_arti_pbl}
\begin{align}
    \widetilde{u}(x,y ) 
        &=
            (x-1) \, 
            ( 
                A({ {x,y}})
            -
                A({ {0,y}}) \,
                e^{-x/\ep}
            ), \\
       A(x,y ) 
        &=
                \hat{u}({ {x,y}}) - 
                 \hat{u}({ {x,0}})(1-y)e^{-y/\sqrt{\ep}}
            -
                 \hat{u}({ {x,1}})y e^{-(1-y)/\sqrt{\ep}},            
\end{align}
\end{subequations}
Compared to the PBLs $\bar{\varphi}_B$ in (\ref{pbl_zero_compati_app}), 
these empirically obtained exponential functions are preferred due to their simplicity of implementation using the exponential functions and their ability to effectively capture the sharpness of the layers. 
However, as seen in Figure \ref{fig:plain3-11} (B), we still observe small over-shootings near $(0,0)$ and $(0,1)$, indicating that the empirically obtained exponential functions do not fully capture the sharpness of the PBLs and CBLs.


In order to fully account for the small over-shootings and achieve a sharp approximation, 
we require the use of all the PBLs $\bar{\varphi}_B$, $\bar{\varphi}_T$, 
the CBLs $\bar{\zeta}_B$, $\bar{\zeta}_T$, and 
the OBL $\bar{\theta}_L$. As indicated in Remark \ref{r:improved_char} for non-compatible $f$, 
we  write
\begin{subequations}\label{sl-pinn2}
\begin{align}
u &=  h(x,y) -  h(0,y)e^{-x/\ep} + \mathcal{O}(\ep^{\frac{3}{4}}),
\end{align} 
where
\begin{align}
h(x,y) = u^0(x,y) - \bar{\varphi}_B(x,y) - \bar{\varphi}_T(x,y).
\end{align}
\end{subequations}

To construct the SL-PINN, 
we drop the $\mathcal{O}(\ep^{\frac{3}{4}})$,  
replace $u^0$ with the neural net approximation $\hat{u}$, 
and write 
the SL-PINN approximation $\widetilde{u}$ as 
\begin{subequations}\label{compati_model_pbl}
\begin{align}
    \widetilde{u}(x,y ) 
        &=  
        (x-1)
         \, 
            ( 
                A({ {x,y}})
            -
                A({ {0,y}}) \,
                e^{-x/\ep}
            ), \\
       A(x,y ) 
        &=
                \hat{u}({ {x,y}}) - \bar{\varphi}_B(x,y)
            -  \bar{\varphi}_T(x,y),           
\end{align}
\end{subequations}
where the heat solutions 
$\bar{\varphi}_B$ and $ \bar{\varphi}_T$ are defined in (\ref{pbl_zero_compati_app}), 
Because these parabolic correctors are involved in convolution integrals, 
implementing SL-PINN with those correctors is
not practical nor convenient.

As an alternative, we 
propose to construct the SL-PINN 
for the following system of equations for $u$,  $\ov{\varphi}_B$ and $\ov{\varphi}_T$ all together: 

\begin{subequations}\label{pbl_sys}
\begin{align}
- \ep\Delta u 
- \dfrac{\pa u}{\pa x} &= f,
\quad \text{in } \Omega,\\
\spacer
- \dfrac{ \pa^2 \bar{\varphi}_{B}}{\pa {y}^2} - 
\dfrac{\pa \bar{\varphi}_{B}}{\pa x} &= 0, 
\quad \text{in } \Omega,\\ 
- \dfrac{ \pa^2 \bar{\varphi}_{B}}{\pa {y}^2} - 
\dfrac{\pa \bar{\varphi}_{B}}{\pa x} &= 0, 
\quad \text{in } \Omega,\\
\end{align}
supplemented with the boundary conditions  
\begin{align}
& u = 0 \text{ on } \pa\Omega,\\ 
&\bar{\varphi}_{B}(x,0) = - {u}^0({ {x,0}}),\ \bar{\varphi}_{B}(1,y) = 0,\ \bar{\varphi}_{B}\rightarrow 0\ \text{as ${y}\rightarrow\infty$},\\
&\bar{\varphi}_{T}(x,1) =  - {u}^0({ {x,1}}),\ \bar{\varphi}_{T}(1,y) = 0,\ \bar{\varphi}_{T}\rightarrow 0\ \text{as ${y}\rightarrow -\infty$}.
\end{align}
\end{subequations}

To approximate $\bar{\varphi}_{B}, \bar{\varphi}_{T}$, we respectively use the neural net approximations $\widetilde{\varphi}_{B}, \widetilde{\varphi}_{T}$,
\begin{subequations}\label{pbl_sys_SL-PINN}
\begin{align}
\widetilde{\varphi}_{B} &= (x-1)\big(e^{-\bar{y}}\hat{u}({ {x,0}}) + (e^{-\bar{y}}-e^{-2\bar{y}})\hat{\varphi}_{B}    \big),\\
\widetilde{\varphi}_{T} &= (x-1)\big(e^{-\tilde{y}}\hat{u}({ {x,1}}) + (e^{-\tilde{y}}-e^{-2\tilde{y}})\hat{\varphi}_{T}\big),
\end{align}
and, to approximate the solution $u$, we use $\widetilde{u}$:
\begin{align}
    \widetilde{u}(x,y ) 
        &=
                A({ {x,y}})
            -
                A({ {0,y}})(1-x) \,
                e^{-x/\ep}, \\
       A(x,y ) 
        &=
                (x-1)\hat{u}({ {x,y}}) - 
                 (1-y)\widetilde{\varphi}_B
            -
                 y \widetilde{\varphi}_T.         
\end{align}
\end{subequations}

Figure \ref{fig:plain3-11}, (D) shows the neural net approximation $\widetilde{\varphi}_{B}$, while the $\widetilde{\varphi}_{T}$ can be shown similarly. By utilizing the corrector functions, $\widetilde{\varphi}_{T}$, $\widetilde{\varphi}_{T}$, as shown in (C), we can accurately approximate the solution $u$ by $\widetilde{u}$ in (\ref{pbl_sys_SL-PINN}). As opposed to (A) and (B), no over-shootings are observed at the corners $(0,0)$, $(0,1)$ or the edges $y=0,1$.


%

%
%


 \begin{figure}[h!]
     \centering
     \begin{subfigure}[b]{0.49\textwidth}
         \centering
         \includegraphics[width=\textwidth]{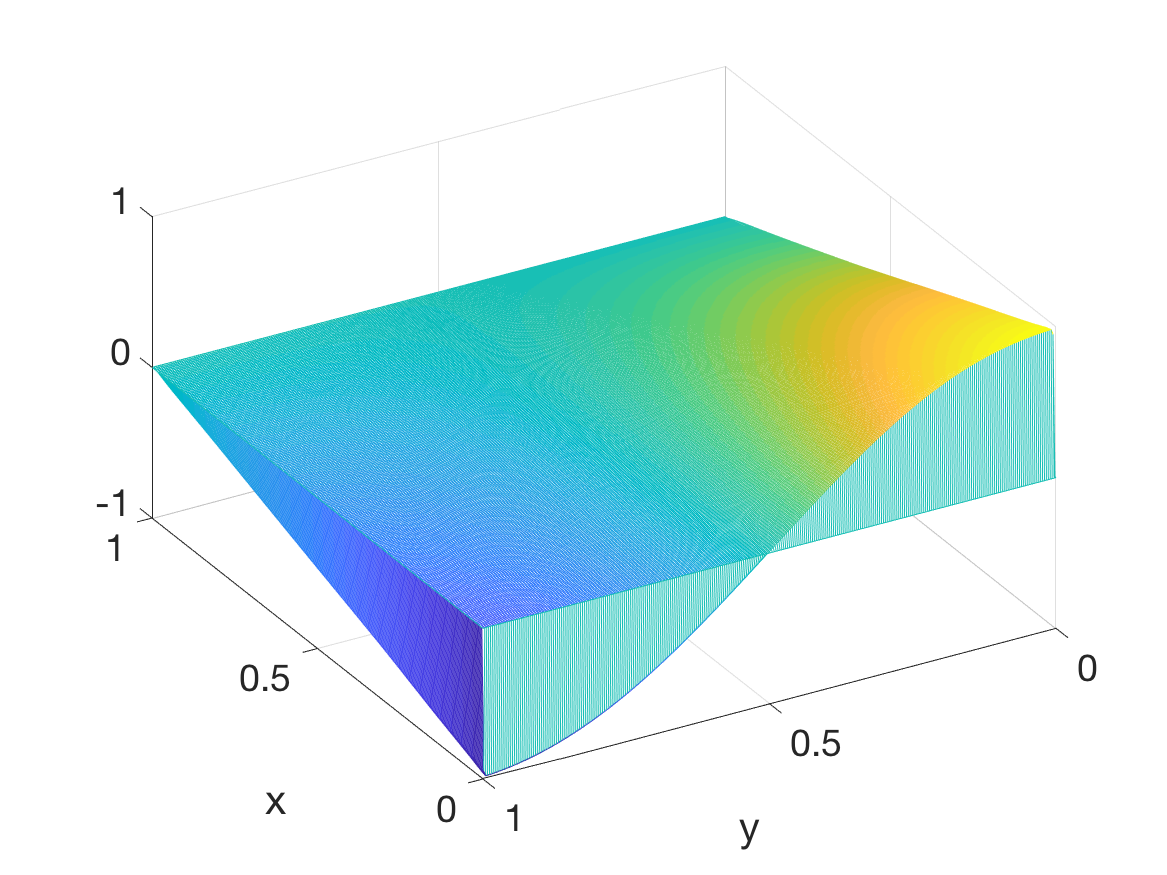}
         \caption{$f =\cos(\pi y)$, $\ep= 10^{-6}$; EPOCHS=3500, $N^2 = 50^2$}
     \end{subfigure}
     \begin{subfigure}[b]{0.49\textwidth}
         \centering
         \includegraphics[width=\textwidth]{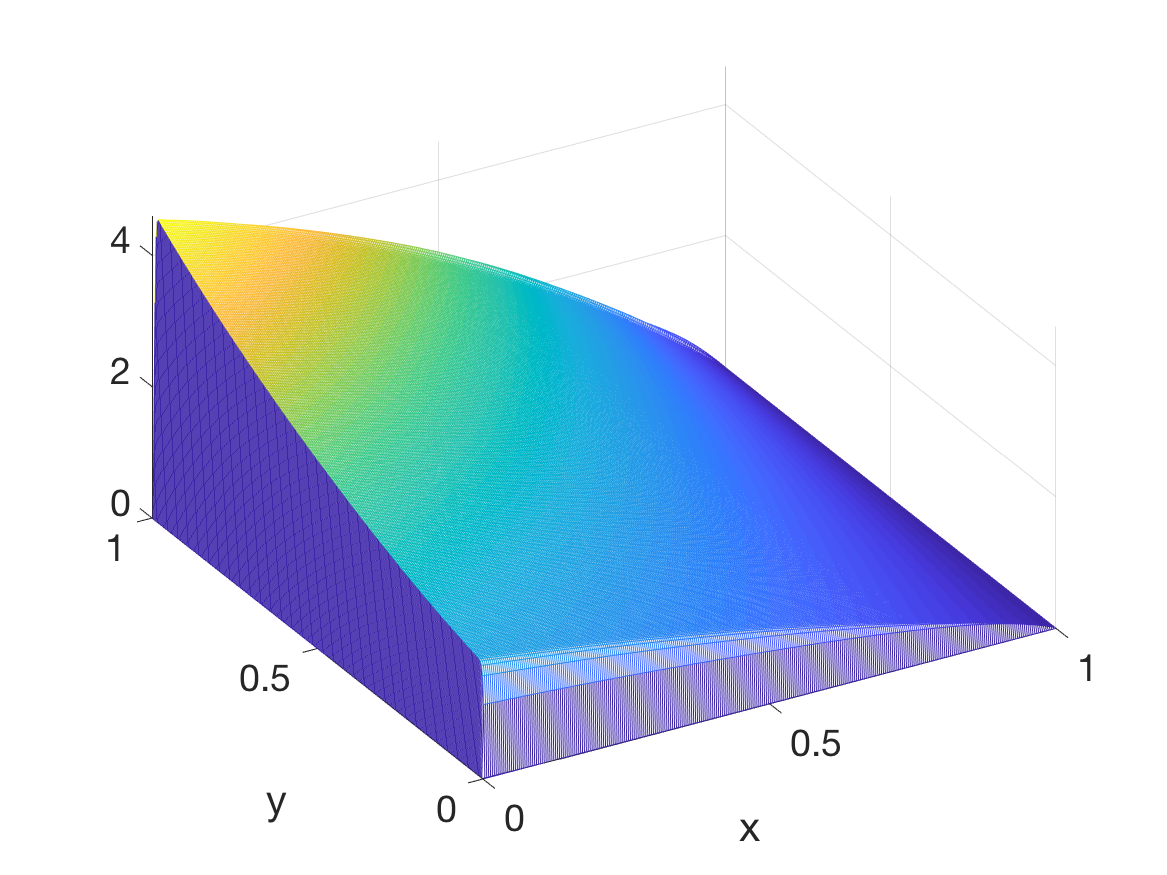}
         \caption{$f=\exp(x + y)$, $\ep= 10^{-5}$; EPOCHS=3500, $N^2 = 350^2$}
     \end{subfigure}     
     \caption{The surface plot of the neural net approximation $\widetilde{u}$ by SL-PINN (\ref{pbl_sys_SL-PINN}) for Problem (\ref{compti_f}).}
        \label{fig:plain3-1}
\end{figure} 
 
For more non-compatible examples, we consider the functions $f = \cos(\pi y)$ and $\exp(x+y)$. The neural net solutions $\widetilde{u}$ by the SL-PINN  in (\ref{pbl_sys_SL-PINN}), depicted in Figure \ref{fig:plain3-1}, agree with the numerical solutions in \cite{JT05}. To achieve higher accuracy in (B), we require a larger number of collocation points, specifically $N=350$.

 In future work, we will investigate a system similar to (\ref{pbl_sys}) for more general problems involved in various boundary layers in different locations and their interactions.
 
\section{Conclusion}

In this study, we have introduced a semi-analytic method, called SL-PINN, to enhance the numerical efficiency of 2-layer PINNs when solving singularly perturbed boundary value problems, convection-dominated equations on rectangular domains. 
For each problem, we obtained an analytic approximation of the stiff component of the solution within the boundary layer, referred to as the {\it corrector }function. Incorporating the correctors into the structure of the 2-layer PINNs, we resolved the stiffness issue of the approximate solutions and developed a new semi-analytic SL-PINNs enriched with the correctors. Through numerical simulations in Section \ref{sec_numerics}, we confirmed that our new approach produces stable and convergent approximations of the solutions.

\section*{Acknowledgments}

\noindent 
Gie was partially supported by 
Ascending Star Fellowship, Office of EVPRI, University of Louisville; 
Simons Foundation Collaboration Grant for Mathematicians; 
Research R-II Grant, Office of EVPRI, University of Louisville; 
Brain Pool Program through the National Research Foundation of Korea (NRF) (2020H1D3A2A01110658).  
Hong was supported by Basic Science Research Program through the National Research Foundation of Korea (NRF) funded by the Ministry of Education (NRF-2021R1A2C1093579) and the Korea government(MSIT)(No. 2022R1A4A3033571). 
Jung was supported by the National Research Foundation of Korea(NRF) grant
funded by the Korea government(MSIT) (No. 2023R1A2C1003120).

\end{document}